\numberwithin{equation}{section}
\theoremstyle{plain}
\newtheorem{thm}{Theorem}[section]
\newtheorem*{thm*}{Theorem}
\newtheorem{lemma}[thm]{Lemma}
\newtheorem{proposition}[thm]{Proposition}
\theoremstyle{definition}
\newtheorem{definition}[thm]{Definition}
\newtheorem*{definition*}{Definition}
\newtheorem{assumption}[thm]{Assumption}
\newtheorem{remark}[thm]{Remark}
\newtheorem*{remark*}{Remark}
\newcommand{\F}{\mathcal{F}}
\newcommand{\E}{\mathbb{E}}
\newcommand{\PP}{\mathbb{P}}
\newcommand{\II}{\mathds{1}}
\newcommand{\ul}{\underline}
\newcommand{\field}[1]{\mathds{#1}}
\newcommand{\R}{\field{R}}
\newcommand{\C}{\field{C}}
\newcommand{\N}{\field{N}}
\newcommand{\Var}{\text{Var}}
\newcommand{\Cov}{\text{Cov}}
\newcommand{\tmix}{t_{\mathrm{mix}}}
\newcommand{\gammaps}{\gamma_{\mathrm{ps}}}
\newcommand{\dtv}{d_{\mathrm{TV}}}
\newcommand{\mtx}[1]{\bm{#1}}
\newcommand{\blind}{0}
\begin{document}

\def\spacingset#1{\renewcommand{\baselinestretch}%
{#1}\small\normalsize} \spacingset{1}


\if0\blind
{
  \title{\bf Non-asymptotic confidence intervals for MCMC in practice}
  \author{Benjamin M. Gyori\hspace{.2cm}\\
    Department of Systems Biology, \\Harvard University\\
    and \\
    Daniel Paulin \\
    Department of Statistics and Applied Probability, \\National University of Singapore}
  \maketitle
} \fi

\if1\blind
{
  \bigskip
  \bigskip
  \bigskip
  \begin{center}
    {\LARGE\bf Non-asymptotic confidence intervals for MCMC in practice}
\end{center}
  \medskip
} \fi

\bigskip
\begin{abstract}
Using concentration inequalities, we give non-asymptotic confidence intervals for estimates obtained by Markov chain Monte Carlo (MCMC) simulations, when using the approximation 
 $\E_{\pi} f\approx (1/(N-t_0))\cdot \sum_{i=t_0+1}^N f(X_i)$. To allow the application of non-asymptotic error bounds in practice, here we state bounds formulated in terms of the spectral properties of the chain and the properties of $f$ and propose estimators of the parameters appearing in the bounds, including the spectral gap, mixing time, and asymptotic variance. We  introduce a method for setting the burn-in time and the initial distribution that is theoretically well-founded and yet is relatively simple to apply. We also investigate the estimation of $\E_{\pi}f$ via subsampling and by using parallel runs instead of a single run. Our results are applicable on both reversible and non-reversible Markov chains on discrete as well as general state spaces. We illustrate our methods by simulations for three examples of Bayesian inference in the context of risk models and clinical trials.
\end{abstract}

\noindent%
{\it Keywords:} Markov chain Monte Carlo, error bounds, concentration inequalities, simulation

\vspace{10pt}

\noindent Source code available at: \url{http://github.com/bgyori/mcmc\_nonasym}

\vfill

\newpage
\spacingset{1.45} 

\section{Introduction}
The Monte Carlo method relies on taking independent samples from a probability distribution to approximate an integral with respect to that distribution. Often, however, it is impossible or impractical to obtain such independent samples. One may still be able to construct a Markov chain whose stationary distribution matches the target distribution. It is then possible to obtain a series of dependent samples by sampling from the Markov chain. This method is called Markov chain Monte Carlo (MCMC). 

Let $X_1,X_2,\ldots,$ be a time-homogeneous, ergodic Markov chain, taking values in $\Omega$, and having stationary distribution $\pi$. Suppose that we are interested in computing $\E_{\pi} f$ for some $f:\Omega \to \R$. 
Then we usually make the approximation 
\begin{equation}\label{eqaverr}
\E_{\pi} f\approx \frac{\sum_{i=t_{0}+1}^N f(X_i)}{N-t_0},
\end{equation}
for some $t_0\ge 0$ (``burn-in time"). For $t_0$ fixed, and $N\to \infty$, this average converges to $\E_{\pi} f$ by the ergodic theorem. However, it is not clear how much this convergence is slowed down due to the dependence of the samples. Consequently, an important question in practice is, how large should $N$ be so that this approximation is correct to a certain level of precision?  To answer this question, we need to use error bounds.

Most of the error bounds in the literature are based on the convergence of the MCMC empirical average to the normal distribution. Although these bounds are convenient because they only depend on the asymptotic variance, due to the asymptotic nature of these bounds, they may underestimate the error for finite sample sizes. In contrast, concentration inequalities can provide finite sample size (non-asymptotic) error bounds. However, these error bounds have had limited practical applicability since parameters appearing in the inequalities (such as the variance, spectral gap and mixing time) are often not known in practice and principled methods for their estimation have not been available.

The main objective of this paper is to establish the practical applicability of non-asymptotic error bounds by giving estimators of their key parameters based on the data. We state estimators for the variance and the asymptotic variance ($\Var_{\pi}(f)$ and $\sigma^2$), and prove concentration inequalities to estimate the precision of these estimators. We state practically computable estimators for the spectral gap and the mixing time, and give a theoretical explanation for their usage. In addition to the estimators for the parameters, we also show how the error bounds are affected by subsampling (i.e.\ averaging only every $m$th term in \eqref{eqaverr}), and by running several parallel chains instead of a single chain. We explain when it is worthwhile to use these techniques.

We demonstrate our method on simulation results for Bayesian inference for risk models and clinical trial design. Our results show that the non-asymptotic error bounds based on concentration inequalities are more reliable than those based on the normal approximation. Specifically, we find that when the objective is to calculate the expected value of an indicator function (for instance an indicator of success or failure), the normal approximation often  misleadingly underestimates the error while non-asymptotic error bounds remain robust. This is an important consideration in sensitive settings. 

The structure of the paper is as follows. In the remainder of this section we explain basics from the theory of general state space Markov chains and briefly review the results in the literature on bounding the error of MCMC empirical averages. In Section \ref{sect:concentrationbounds}, we state non-asymptotic error bounds in a form that is practically applicable. In Section \ref{estparameters}, we propose estimators for the variance, asymptotic variance, spectral gap and mixing time of the chain used in the error bounds. In Section \ref{methodologicalcons}, we explain how error bounds are affected by subsampling and parallel runs. Finally, Section \ref{sectsimulations} contains our simulation results. The proof of the concentration bounds for our estimators of the variance and asymptotic variance and details on our simulation examples are presented in the Appendix. 

\subsection{Basic definitions for general state space Markov chains}
\label{section_preliminary}

In this section, we give some definitions from the theory of general state space Markov chains.
The total variational distance of two measures $P,Q$ defined on the measurable space $(\Omega,\F)$ is defined as
\begin{equation}\label{dtvdef2}\dtv(P,Q):=\inf_{\pi[X\sim P, Y\sim Q]} \E_{\pi}\left(\II[X\ne Y]\right),\end{equation}
here $\pi[X\sim P, Y\sim Q]$ denotes a distribution on $(\Omega^2,\F\times \F)$ with marginals $P$, and $Q$, and the infimum is taken over all such distributions.

We say that a Markov chain is \emph{$\phi$-irreducible}, if there exists a non-zero $\sigma$-finite measure $\phi$ on $\Omega$ such that for all $A\subset \Omega$ with $\phi(A)>0$, and for all $x\in \Omega$, there exists a positive integer $n=n(x,A)$ such that $P^n(x,A)>0$ (with $P^n$ denoting the $n$ step Markov kernel).

We call a Markov chain with stationary distribution $\pi$ \emph{aperiodic} if there do not exist $d\ge 2$, and disjoints measurable subsets $\Omega_1,\ldots,\Omega_d \subset \Omega$ with $\pi(\Omega_1)>0$, $P(x,\Omega_{i+1})=1$ for all $x\in \Omega_i$, $1\le i\le d-1$, and $P(x,\Omega_1)=1$ for all $x\in \Omega_d$.

These properties are sufficient for convergence to a stationary distribution. Theorem 4 of \cite{Robertsgeneral} shows that if a Markov chain on a state space with countably generated $\sigma$-algebra is $\phi$-irreducible and aperiodic, and has a stationary distribution $\pi(\cdot)$, then for $\pi$-almost every $x\in \Omega$, 
\[\lim_{n\to \infty}\dtv(P^n(x,\cdot),\pi(\cdot))=0.\]
A Markov chain with stationary distribution $\pi$, state space $\Omega$, and transition kernel $P(x,dy)$ is \emph{uniformly ergodic} if 
\[\sup_{x\in \Omega} \dtv\left(P^n(x,\cdot),\pi\right)\le M\rho^n, \hspace{ 5 mm} n=1,2,3,\ldots\]
for some $\rho<1$ and $M<\infty$, and we say that it is \emph{geometrically ergodic}, if
\[\dtv\left(P^n(x,\cdot),\pi\right)\le M(x)\rho^n, \hspace{ 5 mm} n=1,2,3,\ldots\]
for some $\rho<1$, where $M(x)<\infty$ for $\pi$-almost every $x\in \Omega$.

The mixing time of a time-homogeneous Markov chain with general state space is defined the following way (see Section 4.5 and 4.6 of \cite{peresbook}).
\begin{definition}\label{mixhom}
Let $X_1,X_2,\ldots$ be a time-homogeneous Markov chain with transition kernel $P(x,dy)$, state space $\Omega$ (a Polish space), and stationary distribution $\pi$.
The \emph{mixing time} of the chain, denoted by $\tmix$, is defined as
\[d(t):=\sup_{x\in \Omega} \dtv\left(P^t(x,\cdot),\pi \right), \text{ and }\hspace{2mm}\tmix:=\min\{t: d(t)\le 1/4\}.\]
\end{definition}

The fact that $\tmix$ is finite is equivalent to the \emph{uniform ergodicity} of the chain (see \cite{Robertsgeneral}, Section 3.3). We call a Markov chain $X_1,X_2,\ldots$ on state space $(\Omega,\F)$ with transition kernel $P(x,dy)$ \emph{reversible} if there exists a probability measure $\pi$ on $(\Omega,\F)$ satisfying the detailed balance conditions,
\begin{equation}
\pi(dx) P(x,dy)= \pi(dy) P(y,dx) \text{ for  every } x,y\in \Omega.
\end{equation}
Define $L_2(\pi)$ as the Hilbert space of complex valued measurable functions that are square integrable with respect to $\pi$, endowed with the inner product $(f,g)=\int f g^*\, \mathrm{d}\pi$. $P$ can be then viewed as a linear operator on $L_2(\pi)$, denoted by $\mtx{P}$, defined as \[(\mtx{P} f)(x):=\E_{P(x,\cdot)}(f),\] and reversibility is equivalent to the self-adjointness of $\mtx{P}$.
The operator $P$ acts on measures to the left, i.e.\ for every measurable subset $A$ of $\Omega$, 
\[(\mu \mtx{P})(A):=\int_{x\in \Omega} P(x,A) \mu(\mathrm{d} x).\]
For a Markov chain with stationary distribution $\pi$, we define the \emph{spectrum} of the chain as
\[S_2:=\{\lambda\in \C\setminus 0: (\lambda\mtx{I}-\mtx{P})^{-1}\text{ does not exist as a bounded linear operator on } L_2(\pi)\}.\] For reversible chains, $S_2$ lies on the real line. 
Now we define the spectral properties of the chain, the spectral gap, absolute spectral gap, and the pseudo spectral gap.
\begin{definition}\label{spectralgapdef}
We define the \emph{spectral gap} for reversible chains as
\begin{align*}
\gamma&:=1-\sup\{\lambda: \lambda\in S_2, \lambda\ne 1\}  \quad \text{if eigenvalue 1 has multiplicity 1,}\\
\gamma&:=0 \quad \text{otherwise}.
\end{align*}
For both reversible, and non-reversible chains, we define the \emph{absolute spectral gap} as
\begin{align*}
\gamma^*&:=1-\sup\{|\lambda|: \lambda\in S_2, \lambda\ne 1\}  \quad \text{if eigenvalue 1 has multiplicity 1,}\\
\gamma^*&:=0 \quad \text{otherwise}.
\end{align*}
For both reversible, and non-reversible chains, we define the \emph{pseudo spectral gap} as
\begin{equation}\label{gammapsdef}
\gammaps:=\max_{k\ge 1} \left\{\gamma((\mtx{P}^*)^k \mtx{P}^k)/k\right\}, 
\end{equation}
where $P^*$ is the adjoint of $P$ in $L_2(\pi)$, and $\gamma((\mtx{P}^*)^k \mtx{P}^k)$ denotes the spectral gap of the self-adjoint operator $(\mtx{P}^*)^k \mtx{P}^k$.
\end{definition}
\begin{remark}
Note that for reversible chains, $\gamma\ge \gamma^*$.
\end{remark}

The relation between spectral gap, pseudo spectral gap, and mixing time is given by the following proposition (Propositions 3.3 and 3.4 of \cite{Martoncoupling}).
\begin{proposition}\label{tmixlambdaprop}
For uniformly ergodic reversible/non-reversible chains, respectively,
\begin{equation}\label{tmixbound1}
\gamma^*\ge \frac{1}{1+\tmix/\log(2)}, \hspace{5mm} \gammaps\ge \frac{1}{2\tmix}.
\end{equation}

For reversible/non-reversible chains on finite state spaces, respectively,
\begin{equation}
\tmix\le \frac{2\log(2) + \log(1/\pi_{\min})}{2\gamma^*}, \hspace{5mm}\tmix\le \frac{1+2\log(2) + \log(1/\pi_{\min})}{\gammaps},
\end{equation}
with $\pi_{\min}=\min_{x\in \Omega} \pi(x)$.
\end{proposition}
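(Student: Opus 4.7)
The four inequalities split into a pair of bounds on $\tmix$ (the last two) and a pair of lower bounds on the spectral quantities (the first two). The algebraic backbone for both directions is the identity
\[
1 - \gamma((\mtx{P}^*)^k \mtx{P}^k) \;=\; \|\mtx{P}^k\|_{L_0^2(\pi) \to L_0^2(\pi)}^2,
\]
where $L_0^2(\pi) = \{f \in L^2(\pi) : \pi(f) = 0\}$; this holds because $(\mtx{P}^*)^k \mtx{P}^k$ is a positive self-adjoint operator whose spectral radius on $L_0^2$ equals $\|\mtx{P}^k\|^2$. The equivalent probabilistic form is the per-block $\chi^2$-contraction $\chi^2(\mu \mtx{P}^k, \pi) \le \bigl(1 - \gamma((\mtx{P}^*)^k \mtx{P}^k)\bigr) \chi^2(\mu, \pi)$, obtained by writing $d(\mu \mtx{P}^k)/d\pi = (\mtx{P}^*)^k (d\mu/d\pi)$ and using the identity above.

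For the upper bounds on $\tmix$, I combine Cauchy--Schwarz $\dtv(\mu,\pi) \le \tfrac{1}{2}\sqrt{\chi^2(\mu,\pi)}$ with $\chi^2(\delta_x,\pi) \le 1/\pi_{\min}$ and iterate the contraction. In the reversible case, self-adjointness gives $1 - \gamma(\mtx{P}^*\mtx{P}) = (1-\gamma^*)^2$, so $d(t) \le (1-\gamma^*)^t/(2\sqrt{\pi_{\min}}) \le e^{-\gamma^* t}/(2\sqrt{\pi_{\min}})$; solving $d(t) \le 1/4$ yields the reversible bound $\tmix \le (2\log 2 + \log(1/\pi_{\min}))/(2\gamma^*)$. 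In the non-reversible case, I apply the contraction with $k^*$ realizing the maximum (or supremum) in the definition of $\gammaps$, so that $\gamma((\mtx{P}^*)^{k^*}\mtx{P}^{k^*}) = k^* \gammaps$; iterating $n$ blocks and solving $d(k^* n) \le 1/4$ gives $k^* n \le (2\log 2 + \log(1/\pi_{\min}))/\gammaps + k^*$. The closing observation is $k^* \le 1/\gammaps$, forced by $\gamma((\mtx{P}^*)^{k^*}\mtx{P}^{k^*}) \le 1$; this absorbs the $+k^*$ into $+1/\gammaps$ and produces the stated non-reversible bound.

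For the lower bounds on the spectral quantities, the task is to upper-bound $\|\mtx{P}^{\tmix}\|_{L_0^2}$ using only $d(\tmix) \le 1/4$. For non-reversible chains I use Riesz--Thorin interpolation on the scale $L_0^1 \subset L_0^2 \subset L_0^\infty$: the elementary bounds $\|\mtx{P}^t\|_{L_0^1 \to L_0^1} \le 1$ (Markov contraction) and $\|\mtx{P}^t\|_{L_0^\infty \to L_0^\infty} \le 2 d(t)$ (from $|\mtx{P}^t f(x) - \pi(f)| \le 2 \|f\|_\infty d(t)$) yield $\|\mtx{P}^t\|_{L_0^2} \le \sqrt{2 d(t)}$. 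At $t = \tmix$ this is $\le 1/\sqrt{2}$, so $\gamma((\mtx{P}^*)^{\tmix}\mtx{P}^{\tmix}) \ge 1/2$ and $\gammaps \ge 1/(2\tmix)$. The interpolation argument loses a factor of $2$ in the exponent and is too weak for the reversible bound, so instead I exploit self-adjointness. Using the spectral theorem for the bounded self-adjoint operator $\mtx{P}$, I produce for each $\epsilon > 0$ a bounded approximate eigenfunction $f$ whose spectral measure is concentrated near $\pm(1-\gamma^*)$, and apply the pointwise bound $\|\mtx{P}^{\tmix} f\|_\infty \le 2\|f\|_\infty d(\tmix) \le \tfrac12 \|f\|_\infty$ to deduce $(1-\gamma^* - \epsilon)^{\tmix} \le 1/2$. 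Letting $\epsilon \downarrow 0$ gives $(1-\gamma^*)^{\tmix} \le 1/2$, and the calculus inequality $1 - e^{-x} \ge x/(1+x)$ applied with $x = \log(2)/\tmix$ converts this into $\gamma^* \ge 1/(1 + \tmix/\log 2)$.

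The main technical obstacle is the reversible lower bound: on a general state space the $L^2(\pi)$-spectrum of $\mtx{P}$ need not be pure point, so the ``bounded approximate eigenfunction'' step requires genuine work with spectral projectors and uses uniform ergodicity in an essential way to keep the approximants uniformly bounded. Everything else, including the $\chi^2$-contraction, the Riesz--Thorin step on the subspaces $L_0^p(\pi)$, and the optimization over $k^*$, is routine Hilbert-space manipulation once the identity $1 - \gamma((\mtx{P}^*)^k \mtx{P}^k) = \|\mtx{P}^k\|^2_{L_0^2}$ is in hand.
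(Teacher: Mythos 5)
This proposition is not proved in the paper at all: it is quoted from Propositions 3.3 and 3.4 of \cite{Martoncoupling}, so the comparison below is with the standard arguments behind those results. Your two \emph{upper} bounds on $\tmix$ are correct and essentially the known proof: the per-block $\chi^2$-contraction, $\dtv(\mu,\pi)\le\tfrac12\sqrt{\chi^2(\mu,\pi)}$, $\chi^2(\delta_x,\pi)\le 1/\pi_{\min}$, and the observation $k^*\gammaps=\gamma((\mtx{P}^*)^{k^*}\mtx{P}^{k^*})\le 1$ to absorb the rounding term are all right, and the identity $1-\gamma((\mtx{P}^*)^k\mtx{P}^k)=\|\mtx{P}^k\|^2_{L_0^2\to L_0^2}$ is the correct backbone.

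Both \emph{lower} bounds, however, have genuine gaps. (i) For $\gammaps\ge 1/(2\tmix)$ you need $\|\mtx{P}^{\tmix}\|^2_{L_0^2}\le 2d(\tmix)$, which is true, but Riesz--Thorin does not apply to the codimension-one subspaces $L_0^p$: to invoke it you must extend the operator to all of $L^p$, and the natural extension $\mtx{P}^t-\mtx{\pi}$ has $L^1\to L^1$ norm $2$ rather than $1$, so interpolation only yields $\|\mtx{P}^{\tmix}\|_{L_0^2}\le 2\sqrt{d(\tmix)}\le 1$ --- vacuous, and the constant is exactly what is at stake here. (ii) The reversible bound is in worse shape: even granting a bounded approximate eigenfunction $f$, your chain of inequalities reads $(1-\gamma^*-\epsilon)^{\tmix}\|f\|_2\le\|\mtx{P}^{\tmix}f\|_2\le\|\mtx{P}^{\tmix}f\|_\infty\le\tfrac12\|f\|_\infty$, which only gives $(1-\gamma^*-\epsilon)^{\tmix}\le\tfrac12\,\|f\|_\infty/\|f\|_2$; the ratio $\|f\|_\infty/\|f\|_2\ge 1$ is uncontrolled (it equals $1$ only for functions of constant modulus), so the required $(1-\gamma^*)^{\tmix}\le 1/2$ does not follow, and on a general state space there is no reason the spectral subspace near $1-\gamma^*$ contains functions with this ratio near $1$. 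Both gaps are closed by one lemma that you never state: for a $\pi$-self-adjoint Markov operator $\mtx{R}$ with stationary measure $\pi$, its $L^1\to L^1$ and $L^\infty\to L^\infty$ operator norms coincide by duality, so Riesz--Thorin \emph{on the full spaces} gives
\begin{equation*}
\|\mtx{R}-\mtx{\pi}\|_{L^2\to L^2}\;\le\;\|\mtx{R}-\mtx{\pi}\|_{L^\infty\to L^\infty}\;\le\;2\sup_{x}\dtv\bigl(R(x,\cdot),\pi\bigr).
\end{equation*}
Applied to $\mtx{R}=\mtx{P}^{\tmix}$ this yields $(1-\gamma^*)^{\tmix}\le 2d(\tmix)\le 1/2$, after which your calculus step $1-e^{-x}\ge x/(1+x)$ correctly produces $\gamma^*\ge 1/(1+\tmix/\log 2)$; applied to $\mtx{R}=(\mtx{P}^*)^{\tmix}\mtx{P}^{\tmix}$ (whose rows are of the form $\delta_x(\mtx{P}^*)^{\tmix}\mtx{P}^{\tmix}=\mu\mtx{P}^{\tmix}$ for a probability measure $\mu$, hence within $d(\tmix)$ of $\pi$) it yields $1-\gamma((\mtx{P}^*)^{\tmix}\mtx{P}^{\tmix})\le 1/2$ and hence $\gammaps\ge 1/(2\tmix)$, with no subspace interpolation and no eigenfunctions needed.
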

\begin{remark}
This proposition means that for Markov chains on finite state spaces, fast mixing and large spectral gap (or pseudo spectral gap) are essentially equivalent.
\end{remark}

\subsection{Previous results on the error of MCMC averages}\label{SecComparison}
In this section we briefly review two approaches to estimating the errors of MCMC empirical averages: the central limit theorem and concentration inequalities. We note that there are some other convergence diagnostic methods in the literature for choosing the ``burn-in time'', such as the Gelman-Rubin diagnostic (see \cite{gelman1992inference},  \cite{cowles1996markov}, \cite{HandbookofMCMC}). However, we are unaware of  generally applicable methods with rigorous theoretical justification.

\subsubsection{Error estimation by the central limit theorem}
The following central limit theorem (CLT) for Markov chains is stated as in \cite{Robertsgeneral} (Theorems 23 and 24, and Proposition 29).
\begin{thm}
Let $(X_i)_{i\ge 1}$ be a Markov chain taking values in some general state space $\Omega$, with stationary distribution $\pi$. Let $f:\Omega\to \R$, with $\E_{\pi}(f^2)<\infty$.
Define the asymptotic variance of $f$, denoted by $\sigma^2$ as
\begin{equation}\label{sigma2vardef}\sigma^2:=\lim_{n\to \infty}\frac{1}{n}\Var_{\pi}(f(X_1)+\ldots+f(X_n)).
\end{equation}

Let $Z_n:=\frac{1}{n}\sum_{i=1}^n f(X_i)$, then
if $(X_i)_{i\ge 1}$ is uniformly ergodic, then for $\pi$-almost every starting point $x$ (i.e.\ $X_1=x$),
\begin{equation}
\sqrt{n}(Z_n-\E_{\pi}f)\Rightarrow N(0,\sigma^2).
\end{equation}
The same holds if $(X_i)_{i\ge 1}$ is geometrically ergodic, and $\E_{\pi}(f^{2+\delta})<\infty$ for some $\delta>0$.
\end{thm}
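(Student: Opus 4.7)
The plan is to prove this via the classical Poisson-equation and martingale-approximation technique. First I would attempt to solve the Poisson equation $g - \mtx{P} g = \bar f$, where $\bar f := f - \E_\pi f$, by taking $g := \sum_{k=0}^\infty \mtx{P}^k \bar f$. Under uniform ergodicity, Doeblin's minorization condition implies that $\mtx{P}$ restricted to the mean-zero subspace of $L_2(\pi)$ becomes a strict contraction after a fixed number of steps, so the series converges in $L_2(\pi)$ whenever $\E_\pi f^2 < \infty$, yielding $g \in L_2(\pi)$. In the geometrically ergodic case the Neumann series need not converge in $L_2(\pi)$; one invokes instead the drift-and-minorization framework of Meyn--Tweedie together with the stronger hypothesis $\E_\pi f^{2+\delta} < \infty$ to secure a suitable solution $g$.

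Given $g$, the identity $\bar f(X_i) = g(X_i) - (\mtx{P}g)(X_i)$ produces the telescoping decomposition
\[
\sum_{i=1}^n \bar f(X_i) \;=\; M_n + g(X_1) - g(X_{n+1}), \qquad M_n := \sum_{i=1}^n \bigl(g(X_{i+1}) - (\mtx{P}g)(X_i)\bigr),
\]
where $M_n$ is a martingale with respect to the natural filtration of the chain. Under the stationary initial law $X_1\sim\pi$, the increments $D_i := g(X_{i+1}) - (\mtx{P}g)(X_i)$ form a stationary, ergodic sequence of square-integrable martingale differences, and a direct computation using the tower property gives $\E D_1^2 = \|g\|_2^2 - \|\mtx{P}g\|_2^2$. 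I would then verify that this quantity agrees with $\sigma^2$ as defined in \eqref{sigma2vardef}, by expanding $\Var_\pi\!\left(\sum_{i=1}^n f(X_i)\right)$ through the same decomposition and noting that the variance of $g(X_1)-g(X_{n+1})$ stays $O(1)$.

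Next I would invoke the stationary martingale CLT (Billingsley, or Hall--Heyde), which immediately gives $M_n/\sqrt{n} \Rightarrow N(0,\sigma^2)$ from the ergodicity of the shift and the $L_2$ bound on $D_1$. The boundary term $(g(X_1)-g(X_{n+1}))/\sqrt n$ vanishes in probability since $g(X_1)$ and $g(X_{n+1})$ are each bounded in $L_2$ by $\|g\|_2$, establishing the CLT under $X_1 \sim \pi$. To pass from the stationary law to $\pi$-a.e.\ starting point $x$, I would couple the chain started from $x$ with a stationary copy: under uniform ergodicity $\dtv(P^k(x,\cdot),\pi)\to 0$ at a rate independent of $x$, so the distributions of $\sqrt n(Z_n-\E_\pi f)$ under $\PP_x$ and $\PP_\pi$ differ in total variation by $o(1)$; under geometric ergodicity the analogous coupling works for $\pi$-a.e.\ $x$.

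The main obstacle is the geometrically ergodic case: without a uniform spectral gap one cannot build $g$ as a convergent Neumann series in $L_2(\pi)$, and one must instead exploit $V$-norm contraction inherited from the Lyapunov drift condition together with the reinforced moment bound $\E_\pi f^{2+\delta} < \infty$ to secure enough integrability of $g$ to run the martingale CLT and to control the boundary term $g(X_{n+1})/\sqrt n$ for non-stationary initializations. By contrast, the uniformly ergodic case is almost mechanical once the Poisson solution has been produced from the $L_2(\pi)$ spectral gap.
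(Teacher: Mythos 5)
First, a point of comparison: the paper does not prove this theorem at all --- it is imported verbatim from \cite{Robertsgeneral} (Theorems 23 and 24 there), so there is no internal proof to measure you against. Your Poisson-equation/martingale-approximation route is a legitimate, standard alternative to the strong-mixing and regeneration arguments behind the cited result, and for the uniformly ergodic case it is essentially sound. Two steps there still need repair. (i) The claim that Doeblin's condition makes $\mtx{P}$ an eventual strict contraction on the mean-zero subspace of $L_2(\pi)$ is true but not immediate for non-reversible chains: uniform ergodicity directly controls $\|\mtx{P}^n-\mtx{\pi}\|_{L_\infty\to L_\infty}\le 2M\rho^n$, and one must interpolate (Riesz--Thorin) against the trivial bound $\|\mtx{P}^n-\mtx{\pi}\|_{L_1(\pi)\to L_1(\pi)}\le 2$ to obtain $\|\mtx{P}^n-\mtx{\pi}\|_{L_2(\pi)}\le 2\sqrt{M}\rho^{n/2}$, which is what actually makes the Neumann series for $g$ converge. (ii) The passage from $\PP_\pi$ to $\PP_x$ is misstated: the laws of $\sqrt{n}(Z_n-\E_\pi f)$ under $\PP_x$ and $\PP_\pi$ do \emph{not} become close in total variation --- the path laws stay far apart. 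What the coupling actually yields is a joint construction with an a.s.\ finite coalescence time $T$ under which the two normalized sums differ by $n^{-1/2}\sum_{i\le T}\bigl(f(X_i)-f(X_i')\bigr)=o_P(1)$, after which Slutsky transfers the weak limit. As written the step would fail; phrased as a coupling-plus-Slutsky argument it goes through.

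The genuine gap is the geometrically ergodic case, which you flag but do not close. There the Poisson equation generally has no solution in $L_2(\pi)$ for an arbitrary $f$ with $\E_\pi(f^2)<\infty$, and the CLT can in fact fail for geometrically ergodic chains under the bare second-moment hypothesis (H\"aggstr\"om's counterexample), which is precisely why the theorem adds the $2+\delta$ moment. ``Invoke drift and minorization to secure a suitable $g$'' is therefore not a proof: the known arguments proceed either through the split-chain/regeneration construction, reducing to a CLT for one-dependent blocks whose second moments are controlled by $\E_\pi(|f|^{2+\delta})<\infty$ together with the geometric tail of the regeneration time (this is how the cited Theorem 24 is obtained), or, for reversible chains, through the Kipnis--Varadhan spectral argument. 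One of these inputs has to be supplied explicitly; the martingale approximation alone does not deliver the second half of the statement.
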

\begin{remark}
For reversible/non-reversible chains, respectively, 
\[\sigma^2\le 2\Var_{\pi}(f)/\gamma, \hspace{2mm} \sigma^2\le 4\Var_{\pi}(f)/\gammaps,\]
see \cite{Martoncoupling}, Theorems 3.1 and 3.2.
\end{remark}

To make use of the limiting distribution $N(0,\sigma^2)$, one needs an estimator of $\sigma^2$ from the data. There are many such estimators in the literature, see \cite{geyer1992practical}, \cite{robertconvergence},  \cite{Hobert}, \cite{JonesHaran}, \cite{BednorzLatusz}, \cite{FlegalJones}.
Based on the regeneration properties of the chain, these estimators are shown to be asymptotically consistent under very mild conditions. However, we are not aware of non-asymptotic bounds on the precision of these estimators. In Section \ref{estparameters} we will define an estimator of the asymptotic variance, $\hat{\sigma}^2$, show its consistency for bounded functions, and prove non-asymptotic error bounds for it in terms of the mixing time of the chain.

Compared to Bernstein-type concentration inequalities, the CLT approach can be slightly sharper for small deviations from the mean. However, the normal approximation is only true asymptotically, whereas concentration bounds hold for any sample size. 
\cite{lezaud1998etude} proves a Berry-Ess\'een bound for Markov chains, estimating the quality of the normal approximation. Unfortunately the constants are too large for practical applicability.

\subsubsection{Error estimation by concentration inequalities}
The simplest way of obtaining concentration is by variance bounds, which lead to quadratic tail bounds via Chebyshev's inequality. We have included two such bounds in Section \ref{sect:concentrationbounds}, Theorem \ref{Chebrevthm} and Theorem \ref{Chebnonrevthm}. The advantage of these results is that they work even for unbounded functions $f$ that have a finite variance (with respect to $\pi$). Their disadvantage is that they only imply quadratic decay instead of the Gaussian decay of concentration inequalities. Various non-asymptotic results also exist for the mean square error of the MCMC estimate. 
A similar bound on the mean square error using the spectral gap and $\|f\|_p$ for $p \ge 2$ is given in \cite{rudolf2011explicit}, and \cite{latuszynski2011nonasymptotic} gives an asymptotically sharp bound on the MSE as a function of the asymptotic variance $\sigma^2$. 

Hoeffding-type  concentration inequalities for empirical averages of Markov chains were first proven by \cite{gillman1998chernoff}, using spectral methods. This was further developed in the seminal work \cite{lezaud1998etude} (see also \cite{Lezaud1}), which has shown Bernstein-type inequalities for reversible and non-reversible chains with general state spaces. 
In \cite{Martoncoupling}, we have shown improved  Bernstein-type inequalities for reversible and non-reversible chains. A sharp version of the Hoeffding bound for reversible finite state space chains was proven in \cite{leon2004optimal}, and generalized to general state spaces, and possibly non-reversible chains in \cite{MiasojedowHoeffding}.

\cite{ollivier2010} proves Bernstein-type concentration inequalities for MCMC empirical averages of Lipschitz functions in metric spaces, based on a contraction coefficient of the Markov chain with respect to the metric. \cite{Mixingandconcentration} shows some generalizations.

Hoeffding bounds under different, regeneration-type assumptions on the chain were proven in \cite{GlynnOrmoneit}, and in \cite{MoulinesMLE}. Bernstein inequalities were proven under such assumptions in \cite{Adamczaktail}. These inequalities are more generally applicable than our bounds based on spectral methods (for example to chains that are not geometrically ergodic), but they are more complicated and involve many terms that are hard to estimate in practice.

\section{Concentration bounds}
\label{sect:concentrationbounds}
We make the following assumption in all of the theorems stated in this paper (which we state here to avoid unnecessary repetition).
\begin{assumption}
We always assume that the Markov chain $X_1, X_2, \ldots$ is time homogeneous, $\phi$-irreducible, and aperiodic. We also assume that it has a Polish state space $\Omega$, Markov kernel $P(x,\mathrm{d}y)$, and denote its unique stationary distribution by $\pi$.
\end{assumption}
\noindent In the following sections, we will state results about the empirical average defined as
\begin{equation}\label{Zdefeq}
Z:=\left(\sum_{i=t_0+1}^N f(X_i)\right)/(N-t_0).
\end{equation}
The quantity $t_0$, the so called \emph{burn-in time} corresponds to the number of samples discarded from the beginning of the chain. In the rest of this section, we first state some results about choosing the necessary burn-in periods, and then present concentration inequalities that give non-asymptotic bounds on the approximation \eqref{eqaverr}. We state Chebyshev and Bernstein-type inequalities for both reversible and non-reversible chains. 

\subsection{Setting the burn-in time}
For initial distribution $q$, and stationary distribution $\pi$, let
\begin{equation}E(t_0):= \dtv\left(q \mtx{P}^{t_0}, \pi\right).
\end{equation}
This definition will be useful for stating our bounds. For uniformly ergodic Markov chains (including ergodic finite state chains), by Proposition 3.11 of \cite{Martoncoupling}, we have
\begin{equation}
E(t_0)\le 2^{-\lfloor n/\tmix \rfloor}.
\end{equation}
Markov chains on general state spaces are often not uniformly ergodic. In such cases,  $E(t_0)$ can be bounded using a different approach.
If a distribution $q$ on $(\Omega,\F)$  is absolutely continuous with respect to $\pi$, we define the \emph{$\chi^2$ contrast} of $q$ and $\pi$, denoted by $N_q$, as
\begin{equation}\label{Nqdef}
N_q:=\E_{\pi}\left(\left(\frac{\mathrm{d}\, q}{\mathrm{d}\, \pi}\right)^2\right)=\int_{x\in \Omega} \frac{\mathrm{d}\, q}{\mathrm{d}\, \pi}(x) q(\mathrm{d} x).
\end{equation}
If $q$ is not absolutely continuous with respect to $\pi$, then we define $N_q:=\infty$. Using $N_q$, by Proposition 3.11 of \cite{Martoncoupling}, for reversible/non-reversible chains, respectively,
\begin{equation}\label{Et0Nqbound}
E(t_0)\le \frac{1}{2}(1-\gamma^*)^{t_0} \cdot \sqrt{N_q-1}, \hspace{2mm} E(t_0)\le \frac{1}{2}(1-\gammaps)^{(t_0-1/\gammaps)/2} \cdot \sqrt{N_q-1}.
\end{equation}
If starting from a fixed point $x$, $q(\{x\})=1$, and $N_q=1/\pi(\{x\})$. Note that if $\pi(\{x\})=0$, then $N_q=\infty$, however this can be usually remedied by taking a few steps in the Markov chain and looking at $N_{\mtx{P}^{k}(x,\cdot)}$ instead (as long as $P^{k}(x,\cdot)$ is absolutely continuous with respect to $\pi$ for every $x\in \Omega$). 

In addition to this, we now propose a new, more generally applicable approach for bounding $N_q$. Suppose that we start from a initial distribution $q$ on a measurable set $\Omega_0\subset \Omega$, such that $\pi(\Omega_0)>0$ and $q$ is absolutely continuous with respect to $\pi$,  (for example, if $\Omega=\R^d$, and $\pi$ has a continuous density with respect to the Lebesgue measure, then we can choose $\Omega_0$ to be a $d$ dimensional box, and $q$ be the uniform distribution on $\Omega_0$). Then the last equation in \eqref{Nqdef} implies that
\begin{equation}\label{Nqbound}
N_q\le \sup_{x\in \Omega_0}\frac{\mathrm{d}\, q}{\mathrm{d}\, \pi}(x)\le \frac{\sup_{x\in \Omega_0}\frac{\mathrm{d}\, q}{\mathrm{d}\, \pi}(x)}{\inf_{x\in \Omega_0}\frac{\mathrm{d}\, q}{\mathrm{d}\, \pi}(x)}\cdot \frac{1}{\pi(\Omega_0)}=\frac{\sup_{x\in \Omega_0}\frac{\mathrm{d}\, \pi}{\mathrm{d}\, q}(x)}{\inf_{x\in \Omega_0}\frac{\mathrm{d}\, \pi}{\mathrm{d}\, q}(x)}\cdot \frac{1}{\pi(\Omega_0)},
\end{equation}
where we have used the fact that $\inf_{x\in \Omega_0}\frac{\mathrm{d}\, q}{\mathrm{d}\, \pi}(x)\le 1/\pi(\Omega_0)$. Notice that the ratio $\frac{\sup_{x\in \Omega_0}\frac{\mathrm{d}\, \pi}{\mathrm{d}\, q}(x)}{\inf_{x\in \Omega_0}\frac{\mathrm{d}\, \pi}{\mathrm{d}\, q}(x)}$ can be bounded even if the normalizing constant in the definition of $\pi(x)$ is not known, which is typical in practice. Finally,  $\pi(\Omega_0)$ can be lower bounded by bounding the tails of the distribution $\pi$, and showing that they become small far away from the center.

\subsection{Reversible chains}
First, we state a Chebyshev-type inequality based on Theorem 3.1 of \cite{Martoncoupling}.
\begin{thm}[Chebyshev inequality for reversible Markov chains]\label{Chebrevthm}
Let $X_1,\ldots,X_N$ be a reversible Markov chain with spectral gap $\gamma$.
Let $f$ be a measurable function from $\Omega$ to $\R$, satisfying that $\E_{\pi} \left(f^2\right)<\infty$.
Denote the variance of $f$ by $V_f:=\Var_{\pi}(f)$, and let $\sigma^2$ be the asymptotic variance of $f$, defined in \eqref{sigma2vardef}.

If we start from the stationary distribution, then 
\begin{equation}\label{varempboundrev}
|\Var_{\pi}\left[(f(X_1)+\ldots+f(X_N))/N\right]-\sigma^2/N|\le \frac{4 V_f}{\gamma^2}\cdot \frac{1}{N^2}.
\end{equation}
Let $Z$ be as in \eqref{Zdefeq}. By Chebyshev's inequality, it follows that for any initial distribution $q$, for any $t\ge 0$, we have
\begin{equation}\label{chebempboundrev}\PP_{q}\left[|Z-E_{\pi} f|\ge t\right]\le \frac{\sigma^2+4 V_f/((N-t_0)\gamma^2)}{(N-t_0) t^2}+E(t_0).\end{equation}
\end{thm}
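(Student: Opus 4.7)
The argument splits naturally into two halves: first the variance estimate \eqref{varempboundrev} under stationarity, and then the tail bound \eqref{chebempboundrev} via Chebyshev plus coupling.

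For \eqref{varempboundrev}, I would center $\tilde f := f - \E_\pi f$ (which changes neither $V_f$, $\sigma^2$, nor $\Var_\pi[(f(X_1)+\ldots+f(X_N))/N]$) and exploit the spectral theorem for the self-adjoint operator $\mtx P$ on $L_2(\pi)$. Let $\mu_{\tilde f}$ be the associated spectral measure of $\tilde f$; by reversibility it is supported on the real line, and by the spectral gap hypothesis on $[-1,1-\gamma]$, with total mass $V_f$. Then $\langle \tilde f, \mtx P^k \tilde f\rangle_\pi = \int \lambda^k\, d\mu_{\tilde f}(\lambda) = \Cov_\pi(f(X_1),f(X_{k+1}))$. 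Expanding
\[
N\,\Var_\pi\!\Bigl[\tfrac{1}{N}\sum_{i=1}^N f(X_i)\Bigr]
= V_f + 2\sum_{k=1}^{N-1}\Bigl(1-\tfrac{k}{N}\Bigr)\int \lambda^k\, d\mu_{\tilde f}(\lambda),
\qquad
\sigma^2 = \int \tfrac{1+\lambda}{1-\lambda}\, d\mu_{\tilde f}(\lambda),
\]
I would interchange the sum and integral (justified since $|\lambda|\le 1-\gamma<1$) and identify the integrand of the difference in closed form via the geometric series identities for $\sum\lambda^k$ and $\sum k\lambda^k$. A short calculation gives
\[
\sigma^2 - N\,\Var_\pi\Bigl[\tfrac{1}{N}\sum_{i=1}^N f(X_i)\Bigr]
= \frac{2}{N}\int \frac{\lambda-\lambda^{N+1}}{(1-\lambda)^2}\, d\mu_{\tilde f}(\lambda),
\]
and then bounding $|\lambda-\lambda^{N+1}|\le 2$ and $(1-\lambda)^{-2}\le \gamma^{-2}$ on the support gives $|\sigma^2 - N\Var_\pi[\bar f_N]|\le 4V_f/(N\gamma^2)$, which is \eqref{varempboundrev} after dividing by $N$.

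For \eqref{chebempboundrev}, I would first handle the case $q=\pi$. Stationarity makes $(X_{t_0+1},\ldots,X_N)$ just a shifted stationary chain of length $N-t_0$, so applying \eqref{varempboundrev} with $N$ replaced by $N-t_0$ yields
\[
\Var_\pi(Z) \le \frac{\sigma^2}{N-t_0} + \frac{4V_f}{(N-t_0)^2\gamma^2},
\]
and Chebyshev's inequality then gives exactly the claimed bound (with the $E(t_0)$ term absent). For an arbitrary initial distribution $q$, I would invoke the coupling characterisation \eqref{dtvdef2} of total variation: there is a coupling of $X_{t_0+1}^q \sim q\mtx P^{t_0}$ and $X_{t_0+1}^\pi \sim \pi$ with $\PP[X_{t_0+1}^q \ne X_{t_0+1}^\pi]\le E(t_0)$, which can be extended by running the two chains together after the first agreement (Markov property). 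Hence the laws of $(X_{t_0+1},\ldots,X_N)$ under $\PP_q$ and $\PP_\pi$ differ in total variation by at most $E(t_0)$, so
\[
\PP_q(|Z-\E_\pi f|\ge t)\le \PP_\pi(|Z-\E_\pi f|\ge t) + E(t_0),
\]
and the stationary Chebyshev bound gives the final result.

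The one nontrivial step is the closed-form evaluation in Part 1; everything else is either the spectral theorem applied in a standard way or Chebyshev plus a one-line coupling. The delicate point to watch is that the spectrum may approach $-1$ (since $\gamma$ controls only the right edge), but the estimate $(1-\lambda)^{-2}\le \gamma^{-2}$ is tight precisely at $\lambda=1-\gamma$, so the negative part of the spectrum causes no loss.
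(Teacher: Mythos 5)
Your argument is correct: the closed-form identity $\sigma^2 - N\Var_\pi[\bar f_N] = \tfrac{2}{N}\int \tfrac{\lambda-\lambda^{N+1}}{(1-\lambda)^2}\,d\mu_{\tilde f}(\lambda)$ checks out, the bounds $|\lambda-\lambda^{N+1}|\le 2$ and $(1-\lambda)^{-2}\le\gamma^{-2}$ are valid on the support $[-1,1-\gamma]$, and the passage from the stationary Chebyshev bound to an arbitrary initial distribution via the total-variation coupling is the standard data-processing step that produces the $E(t_0)$ term. Note, however, that this paper does not prove Theorem \ref{Chebrevthm} at all --- it is imported verbatim as Theorem 3.5 of \cite{Martoncoupling} --- so there is no in-paper proof to compare against; your spectral-measure derivation is essentially the same route taken in that cited source.
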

Now we state a Bernstein-type result based on Theorem 3.3 of \cite{Martoncoupling}.
\begin{thm}[Bernstein inequality for reversible Markov chains]\label{thmBernsteinrev}
Let $X_1,$ $\ldots,X_N$ be a reversible Markov chain with spectral gap $\gamma>0$. Suppose that 
$f:\Omega \to \R$ satisfies that $\sup_{x\in \Omega}|f(x)-\E_{\pi} f|\le C$ for some finite $C$. Let $V_f$ and $\sigma^2$ be defined as in Theorem \ref{Chebrevthm}, and $Z$ as in \eqref{Zdefeq}.
Then for any initial distribution $q$, for any $t\ge 0$, we have
\begin{equation}\label{ReversibleBernsteineq1}
\PP_q\left[ |Z-\E_{\pi} f| \ge t \right]\le 2\exp\left[-\frac{(N-t_0)t^2}{2(\sigma^2+0.8V_f) + 10tC/\gamma} \right]+ E(t_0).
\end{equation}
\end{thm}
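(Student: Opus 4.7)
The plan is to follow a two-step route: first reduce to a stationary start via coupling, then bound the Laplace transform of the empirical sum by spectral methods, and finally optimise. Write $g := f - \E_\pi f$, so that $\|g\|_\infty \le C$ and $\Var_\pi(g) = V_f$, and set $M := N - t_0$, $S_M := \sum_{i=t_0+1}^N g(X_i)$. The coupling form \eqref{dtvdef2} of total variation lets us jointly realise the $q$-initialised chain and a $\pi$-stationary one so that their marginals at time $t_0+1$ agree outside an event $A$ with $\PP(A) \le \dtv(q\mtx{P}^{t_0},\pi) = E(t_0)$; extending the coupling with a common transition kernel from then on makes the two versions of $S_M$ coincide on $A^c$. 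It therefore suffices to prove \eqref{ReversibleBernsteineq1} with $q = \pi$, and the $+E(t_0)$ term is exactly the price of this coupling.

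For the stationary case, Markov's inequality gives $\PP_\pi[S_M \ge Mt] \le e^{-\theta Mt}\E_\pi e^{\theta S_M}$ for $\theta>0$. Because $\mtx{P}$ is self-adjoint on $L^2(\pi)$, the perturbed operator $\mtx{Q}_\theta$ defined by $(\mtx{Q}_\theta h)(x) := e^{\theta g(x)/2}\bigl(\mtx{P}(e^{\theta g/2}h)\bigr)(x)$ is self-adjoint as well, and a direct computation of the Markov expectation yields $\E_\pi e^{\theta S_M} \le \lambda(\theta)^M$ with $\lambda(\theta) := \|\mtx{Q}_\theta\|_{L^2(\pi)\to L^2(\pi)}$ the top eigenvalue. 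The analytic heart of the argument is to expand $\log\lambda(\theta)$ to second order: via the Rayleigh characterisation and a perturbation about $\theta=0$ (where the top eigenvector is the constant $1$ with eigenvalue $1$), the quadratic coefficient is
\begin{equation*}
\langle g,\,(\mtx{I}+2(\mtx{I}-\mtx{P})^{-1}\mtx{P})g\rangle_\pi = \sigma^2,
\end{equation*}
the resolvent being taken on the orthogonal complement of constants where the spectrum of $\mtx{P}$ lies in $[-1,1-\gamma]$. Bounding the cubic and higher-order Taylor remainders using $\|g\|_\infty\le C$ and $\gamma$, and carefully apportioning the resulting terms between a $V_f$-piece and a $(C/\gamma)$-piece, leads to an estimate of the form $\log\lambda(\theta) \le \theta^2(\sigma^2+0.8V_f)/[\,2(1-5\theta C/\gamma)\,]$ for $0<\theta<\gamma/(5C)$.

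Substituting this back into the Chernoff bound and choosing $\theta = t/(\sigma^2+0.8V_f+5tC/\gamma)$, as in the classical Bernstein optimisation, produces the one-sided bound $\exp[-Mt^2/(2(\sigma^2+0.8V_f)+10tC/\gamma)]$; applying the same argument to $-g$ yields the lower tail and the factor $2$. The main obstacle will be the eigenvalue expansion: getting the quadratic coefficient of $\log\lambda(\theta)$ to come out exactly as $\sigma^2$ (rather than a cruder surrogate such as $2V_f/\gamma$) is essential for the bound to be sensitive to the actual variance reduction the chain provides, and it requires recognising the resolvent identity $\sigma^2 = \langle g,(\mtx{I}+2\sum_{k\ge 1}\mtx{P}^k)g\rangle_\pi$ inside the perturbation expansion. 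Tracking the numerical constants $0.8$ and $10$ through the cubic-remainder optimisation is the most delicate bookkeeping step.
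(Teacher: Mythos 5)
This theorem is not proved in the paper at all: it is imported verbatim as Theorem 3.8 of \cite{Martoncoupling}, and the Appendix only proves the propositions of Section \ref{estparameters}. So there is no in-paper argument to compare against; the relevant benchmark is the proof in the cited reference, and your outline does follow essentially that route — reduce to a stationary start by a maximal coupling at time $t_0$ (paying $E(t_0)$), apply a Chernoff bound, control $\E_\pi e^{\theta S_M}$ by the top eigenvalue of the symmetrised perturbed operator $e^{\theta g/2}\mtx{P}e^{\theta g/2}$, and identify the quadratic coefficient of $\log\lambda(\theta)$ with $\sigma^2$ via the resolvent identity. That is the correct skeleton, and recognising that the second-order term must be $\sigma^2$ rather than $2V_f/\gamma$ is indeed the point of the theorem.

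The genuine gap is that the one inequality carrying all the quantitative content, $\log\lambda(\theta)\le \theta^2(\sigma^2+0.8V_f)/\bigl(2(1-5\theta C/\gamma)\bigr)$, is asserted rather than derived. The constants $0.8$ and $10$ in \eqref{ReversibleBernsteineq1} come precisely from bounding the third- and higher-order terms of the Kato perturbation series for $\lambda(\theta)$ — one needs explicit bounds on the derivatives of the eigenvalue and of the associated eigenprojection, expressed through $\|g\|_\infty\le C$ and the spectral gap, and then a geometric-series summation of the remainders; this occupies the bulk of the proof in \cite{Martoncoupling} and cannot be waved through as ``bookkeeping.'' A secondary, fixable issue: the step $\E_\pi e^{\theta S_M}\le\lambda(\theta)^M$ is not an identity — writing $(\mtx{P}E_\theta)^{M-1}=E_{-\theta/2}(E_{\theta/2}\mtx{P}E_{\theta/2})^{M-1}E_{\theta/2}$ gives $\E_\pi e^{\theta S_M}=\langle e^{\theta g/2},\mtx{Q}_\theta^{M-1}e^{\theta g/2}\rangle_\pi\le \lambda(\theta)^{M-1}\E_\pi e^{\theta g}$, and the leftover factor $\E_\pi e^{\theta g}$ must be absorbed into the exponent; this is standard (Lezaud's argument) but needs to be done explicitly or the claimed constants shift. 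As it stands the proposal establishes a Bernstein-type bound of the right shape but does not establish the specific constants in the statement.
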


\subsection{Non-reversible chains}
The most popular MCMC methods use reversible chains, in particular, the Metropolis-Hastings algorithm and the Glauber dynamics (with random scan) are reversible. On the other hand, the Glauber dynamics chain with systematic scan is non-reversible. In fact, using non-reversible chains instead of reversible ones can speed up the mixing time in some cases (see
\cite{diaconisnonrev}, and \cite{Lovaszlifting}). Therefore it is of interest to show concentration bounds for non-reversible Markov chains too. The following result is a Chebyshev-type inequality based on Theorem 3.2 of \cite{Martoncoupling}.
\begin{thm}[Chebyshev inequality for non-reversible Markov chains]\label{Chebnonrevthm}
Let $X_1, \ldots, X_N$ be a  Markov chain with pseudo spectral gap $\gammaps$.
Let $f$ be a measurable function from $\Omega$ to $\R$, satisfying that $\E_{\pi} f^2<\infty$.
Let $V_f$ and $\sigma^2$ be defined as in Theorem \ref{Chebrevthm}. If we start from the stationary distribution, then 
\begin{equation}\label{varempboundnonrev}
|\Var_{\pi}\left[(f(X_1)+\ldots+f(X_N))/N\right]-\sigma^2/N|\le \frac{16 V_f}{\gammaps^2}\cdot \frac{1}{N^2}.
\end{equation}

Let $Z$ be as in \eqref{Zdefeq}, then by Chebyshev's inequality, for any initial distribution $q$, for any $t\ge 0$, we have
\begin{equation}\label{chebempboundnonrev}
\PP_{q}\left[|Z-\E_{\pi} f|\ge t\right]\le \frac{\sigma^2+16V_f/((N-t_0)\gammaps^2)}{(N-t_0) t^2}+E(t_0).
\end{equation}
\end{thm}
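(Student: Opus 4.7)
The tail bound \eqref{chebempboundnonrev} is Chebyshev's inequality applied to the variance bound \eqref{varempboundnonrev}, together with a standard total-variation reduction to stationary start, so I concentrate on \eqref{varempboundnonrev}. Writing $\bar{f}:=f-\E_\pi f$, stationarity and the identity $\E_\pi[\bar{f}(X_1)\bar{f}(X_{1+j})]=\langle\bar{f},\mtx{P}^{j}\bar{f}\rangle_\pi$ give
\[\Var_\pi\!\left(\sum_{i=1}^{N}f(X_i)\right)=NV_f+2\sum_{j=1}^{N-1}(N-j)\,\langle\bar{f},\mtx{P}^{j}\bar{f}\rangle_\pi,\]
and passing to the Ces\`aro limit (legitimised by the geometric decay below) yields $\sigma^2=V_f+2\sum_{j\ge1}\langle\bar{f},\mtx{P}^{j}\bar{f}\rangle_\pi$. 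Writing $Z_N:=N^{-1}\sum_{i=1}^{N}f(X_i)$, subtracting $N\sigma^2$, dividing by $N^2$, and using $j/N\ge1$ on the tail $j\ge N$ gives
\[\bigl|\Var_\pi(Z_N)-\sigma^2/N\bigr|\le\frac{2}{N^2}\sum_{j=1}^{\infty}j\,|\langle\bar{f},\mtx{P}^{j}\bar{f}\rangle_\pi|.\]

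Next comes the key geometric covariance decay. Let $k^*\ge1$ attain the maximum in \eqref{gammapsdef} and set $\mtx{T}:=(\mtx{P}^{*})^{k^*}\mtx{P}^{k^*}$; this is self-adjoint, non-negative and a contraction on $L_2(\pi)$, with spectral gap $k^*\gammaps$. Because $\mtx{P}^{*}\mathbf{1}=\mathbf{1}$, any function $g\in\bar{f}^\perp$\,—\,more precisely any $g$ with $\langle g,\mathbf{1}\rangle_\pi=0$\,—\,has $\mtx{P}^{j}g$ again orthogonal to $\mathbf{1}$ for every $j$. The Rayleigh bound $\langle g,\mtx{T}g\rangle_\pi\le(1-k^*\gammaps)\|g\|_\pi^2$ for such $g$ iterates to $\|\mtx{P}^{mk^*}\bar{f}\|_\pi^{2}\le(1-k^*\gammaps)^{m}V_f$. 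For $j=mk^*+r$ with $0\le r<k^*$, $L_2(\pi)$-contractivity of $\mtx{P}$ together with Cauchy--Schwarz yields
\[|\langle\bar{f},\mtx{P}^{j}\bar{f}\rangle_\pi|\le\|\bar{f}\|_\pi\,\|\mtx{P}^{mk^*}\bar{f}\|_\pi\le V_f\,(1-k^*\gammaps)^{m/2}.\]
Grouping $\sum_{j\ge1}j\,|\langle\bar{f},\mtx{P}^{j}\bar{f}\rangle_\pi|$ into blocks of length $k^*$ and applying $1-(1-x)^{1/2}\ge x/2$ on $[0,1]$ together with $k^*\gammaps\le1$ produces a bound of order $V_f/\gammaps^{2}$; inserting this above gives \eqref{varempboundnonrev} with the stated constant.

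The tail bound \eqref{chebempboundnonrev} then follows in two moves. Chebyshev's inequality under stationary start, applied to $Z$ with $N-t_0$ samples in place of $N$, produces the first summand on the right-hand side. To pass from $\pi$ to an arbitrary $q$, observe that $Z$ depends on $(X_{t_0+1},\ldots,X_N)$ only, whose joint law under $\PP_q$ is a measurable function of the marginal distribution $q\mtx{P}^{t_0}$ of $X_{t_0+1}$; the optimal coupling supplied by \eqref{dtvdef2} therefore gives
\[\PP_q[|Z-\E_\pi f|\ge t]\le \PP_\pi[|Z-\E_\pi f|\ge t]+\dtv(q\mtx{P}^{t_0},\pi),\]
and the last summand equals $E(t_0)$ by definition.

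\textbf{Main obstacle.} The technical heart is the geometric covariance decay. In the reversible case \eqref{varempboundrev} one applies the spectral theorem of $\mtx{P}$ directly and gets constant $4$; here $\mtx{P}$ need not be normal, forcing the detour through the reversibilisation $\mtx{T}$ and introducing the block size $k^*$. Because $k^*$ enters twice\,—\,once inside the per-block decay $(1-k^*\gammaps)^{1/2}$ and once through weighting by $j\sim mk^*$\,—\,and because $k^*\gammaps\le1$ trades each copy of $k^*$ for $1/\gammaps$ at a cost of a factor of $2$, the reversible constant $4$ becomes the $16$ appearing in \eqref{varempboundnonrev}. The subtle point is checking that the final bound is independent of the (possibly large) optimal $k^*$.
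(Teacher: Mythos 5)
The paper does not prove this theorem itself; it is quoted verbatim as Theorem 3.7 of \cite{Martoncoupling}, so there is no internal proof to compare against. Your argument is correct and follows essentially the same spectral route as that reference: the autocovariance expansion of $\Var_\pi$, the geometric decay $\|\mtx{P}^{mk^*}\bar{f}\|_{\pi}^{2}\le(1-k^*\gammaps)^{m}V_f$ extracted from the reversibilisation $(\mtx{P}^{*})^{k^*}\mtx{P}^{k^*}$, the block summation in which $k^*$ cancels (your bookkeeping in fact gives the slightly better constant $8$ in place of $16$), and the maximal-coupling reduction from $\PP_q$ to $\PP_\pi$ that produces the $E(t_0)$ term.
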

Now we state a Bernstein-type inequality based on Theorem 3.4 of \cite{Martoncoupling}.
\begin{thm}[Bernstein inequality for non-reversible Markov chains]\label{NonReversibleBernsteinthmpseudo}
Let $X_1,\ldots, X_N$ be a Markov chain with pseudo spectral gap $\gammaps$. 
Suppose that $f:\Omega \to \R$ satisfies that $\sup_{x\in \Omega}|f(x)-\E_{\pi} f|\le C$ for some finite $C$. Let $V_f$ and $\sigma^2$ be defined as in Theorem \ref{Chebrevthm}, and $Z$ as in \eqref{Zdefeq}. Then for any initial distribution $q$, for any $t\ge 0$, we have
\begin{equation}\label{NonReversibleBernsteineq1}
\PP_q\left[ |Z-\E_{\pi} f| \ge t \right]\le 2\exp\left[-\frac{(N-t_0-1/\gammaps)t^2 \gammaps}{8 V_f +20 C t} \right]+ E(t_0).
\end{equation}
\end{thm}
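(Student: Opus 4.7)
The plan is to combine the exponential tail bound of Theorem 3.10 in \cite{Martoncoupling} (which assumes a stationary start) with the standard burn-in correction that produces the extra term $E(t_0)$.

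First I would handle the initial distribution by coupling the chain started from $q$ with one started from $\pi$ so that the two trajectories agree on $\{t_0+1,\dots,N\}$ with probability at least $1-E(t_0)$; on the complementary event I use a trivial probability bound. This is the same mechanism that places $E(t_0)$ in Theorems \ref{Chebrevthm}--\ref{thmBernsteinrev}, and it reduces the problem to establishing the exponential estimate under the assumption $X_{t_0+1}\sim\pi$.

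Next I would pass to a self-adjoint skeleton. Let $k^\star$ attain the maximum in \eqref{gammapsdef}, so $\gamma\bigl((\mtx{P}^*)^{k^\star}\mtx{P}^{k^\star}\bigr)=k^\star\gammaps$. Splitting $\{t_0+1,\dots,N\}$ into $k^\star$ interleaved subsequences of spacing $k^\star$, each one is stationary under $\pi$ with kernel $\mtx{P}^{k^\star}$ whose multiplicative reversibilisation has spectral gap $k^\star\gammaps$. On each subsequence I would estimate the log moment generating function of $\sum_i (f(X_i)-\E_\pi f)$ by controlling the principal eigenvalue of the tilted operator $\mtx{P}^{k^\star}\mathrm{e}^{\theta(f-\E_\pi f)}$ on $L_2(\pi)$, in the style of \cite{lezaud1998etude}; a Cauchy--Schwarz bound against the Dirichlet form of the reversibilisation keeps $V_f$ in the leading variance slot rather than $\|f-\E_\pi f\|_\infty^2$. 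Combining the $k^\star$ parallel log-MGF bounds by H\"older's inequality and optimising the tilt parameter in the usual Bernstein way then yields the exponent $-(N-t_0-1/\gammaps)\gammaps t^2/(8V_f+20Ct)$; the offset $-1/\gammaps$ absorbs the remainder lost to the interleaved partition, the factor $2$ accounts for the two tails via $f\mapsto -f$, and Step~1 contributes the final $E(t_0)$.

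The hard part will be extracting a \emph{variance-aware} Bernstein exponent on the non-reversible skeleton using only the spectral gap of its reversibilisation: this requires a careful perturbation estimate for the tilted operator that does not degrade $V_f$ into $\|f-\E_\pi f\|_\infty^2$, since the naive bound would cost a factor $\tmix$ or worse. Threading the numerical constants $8$ and $20$ cleanly through that perturbation and through the $k^\star$-fold combination is the bookkeeping that occupies most of the proof in \cite{Martoncoupling}, but no genuinely new ingredients beyond those already used for Theorems \ref{Chebrevthm}--\ref{thmBernsteinrev} are needed.
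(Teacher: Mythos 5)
The paper itself does not prove this theorem: it is quoted verbatim as Theorem~3.10 of \cite{Martoncoupling}, so the only ``proof'' on offer here is the citation. Measured against the proof in that reference, your roadmap is faithful: the burn-in term $E(t_0)$ does come from the maximal-coupling/total-variation reduction to a stationary start; the non-reversible case is indeed handled by taking the maximizing $k^\star$ in \eqref{gammapsdef}, splitting the time window into $k^\star$ interleaved skeletons whose multiplicative reversibilisation has spectral gap $k^\star\gammaps$, bounding each skeleton's log-MGF spectrally, and recombining with H\"older; and the loss of length in the partition (together with $k^\star\le 1/\gammaps$) is exactly what produces the $N-t_0-1/\gammaps$ in the exponent, with the factor $2$ from applying the one-sided bound to $\pm f$.

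The genuine gap is the step you yourself flag and then defer: the Lezaud-style perturbation bound on the largest eigenvalue of the tilted operator that keeps $V_f$ (rather than $C^2$) in the variance slot and yields the specific constants $8$ and $20$. That estimate is not bookkeeping --- it is the entire analytic content of the theorem, and without it you have a plan rather than a proof. In particular, nothing in your sketch rules out the ``naive'' outcome in which the Cauchy--Schwarz step degrades the variance proxy to $\|f-\E_\pi f\|_\infty^2$ or introduces an extra factor of $k^\star$; asserting that a careful argument avoids this is precisely the claim that needs proving. If you want a self-contained argument, you must either carry out that eigenvalue perturbation (as in Lemma~3.3 and Theorem~3.9--3.10 of \cite{Martoncoupling}, building on \cite{lezaud1998etude}) or explicitly cite the reversible Bernstein inequality for the skeleton chains as a black box and track the constants through the H\"older recombination.
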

\begin{remark}
An important assumption of the Bernstein-type inequalities is the boundedness of $f$. Note that via a simple truncation argument, the results can be also extended to unbounded functions, for more details, see Proposition 3.12 of \cite{Martoncoupling}.
\end{remark}

\section{Estimation of parameters in practice} 
\label{estparameters}
The main difficulty we encounter when applying our inequalities is that, in general, we do not know $V_f = \Var_{\pi}(f)$ and $\sigma^2$ (see \eqref{sigma2vardef}). In many cases, the spectral gap $\gamma$, pseudo spectral gap $\gammaps$, and mixing time $\tmix$ are also unknown.

In the next two sections, we are going to give estimates to these quantities based on an initial sample $f(X_{\hat{t}_{0}+1}),\ldots,f(X_{\hat{N}})$.

\subsection{Estimation of the variance and the asymptotic variance}
From the definitions, it is easy to see that we can estimate $V_f$ as
\begin{equation}\label{Vfhatdef}
\hat{V}_f:= \frac{1}{\hat{N}-\hat{t}_0}\left(\sum_{i=\hat{t}_0+1}^{\hat{N}} f^2(X_i)\right) - \left(\frac{1}{\hat{N}-\hat{t}_0}\sum_{i=\hat{t}_0+1}^{\hat{N}} f(X_i)\right)^2.
\end{equation}
Note that the consistency of this estimator under very mild assumptions follows from the strong law of large numbers for Markov chains (see Theorem 17.0.1 of \cite{MeynTweedie}). The next proposition gives a non-asymptotic bound on the upper tails of $V_f-\hat{V}_f$ for uniformly ergodic Markov chains (the proof can be found in the Appendix).
\begin{proposition}\label{propVhatV}
Suppose that $X_1,\ldots,X_{\hat{N}}$ is an uniformly ergodic Markov chain, with stationary distribution $\pi$, and initial distribution $q$. Suppose that $f:\Omega\to \R$ satisfies that $\sup_{x\in \Omega}|f(x)-\E_{\pi} f|\le C$ for some finite $C$. Then for any $T \ge 0$,
\begin{equation}\label{eqVfhatV}
\PP_q\left(V_f-\hat{V}_f\ge \frac{8\tmix}{\hat{N}-\hat{t}_0}+T\right)\le \exp\left( \frac{-(\hat{N}-\hat{t}_0) T^2}{200 C^4 \tmix}\right)+E(\hat{t}_0).
\end{equation}
\end{proposition}
Now we propose an estimator to the asymptotic variance $\sigma^2$. For some integer $k\in [1,\hat{N}-\hat{t}_0-1]$, let
\begin{equation}
\hat{\sigma}^2(k):=\left(\hat{\rho}_0+2\sum_{i=1}^{k}\hat{\rho}_i\right)\cdot \frac{\hat{N}-\hat{t}_0-k}{\hat{N}-\hat{t}_0-3k-1},
\end{equation}
with 
\begin{equation}
\hat{\rho}_i:=\frac{\sum_{j=\hat{t}_0+1}^{\hat{N}-k} f(X_j)f(X_{j+i})}{\hat{N}-\hat{t}_0-k}
-\frac{1}{2}\left(\frac{\sum_{j=\hat{t}_0+1}^{\hat{N}-k} f(X_j)}{\hat{N}-\hat{t}_0-k}\right)^2
-\frac{1}{2}\left(\frac{\sum_{j=\hat{t}_0+i+1}^{\hat{N}-k+i} f(X_j)}{\hat{N}-\hat{t}_0-k}\right)^2.
\end{equation}
If we make the choice $k=\lfloor c\cdot N^{1/3}\rfloor$ for a positive constant $c$, the estimator can be shown to be consistent under very mild conditions, see Theorem 1 of \cite{FlegalJones}. 

The following two propositions bounds on the bias of $\hat{\sigma}^2(k)$, and state a non-asymptotic error bound for it, for uniformly ergodic Markov chains (see the Appendix for proofs).
\begin{proposition}\label{sigma2kbias}
For stationary, reversible chains, when $k$ is even, the expected value of $\hat{\sigma}^2(k)$ satisfies the following inequality:
\begin{equation}\label{hatsigmakrevexpectationbound}
-L_k\le \sigma^2-\E_{\pi}(\hat{\sigma}^2(k))\le U_k,
\end{equation}
with
\begin{align*}
L_k&:=\left(\min\left(V_f,\frac{2V_f}{\gamma}(1-\gamma^*)^{k+1}\right)+\frac{4V_f}{\gamma^2}\frac{2k+1}{(\hat{N}-\hat{t}_0-k)^2}\right)\cdot \frac{\hat{N}-\hat{t}_0-k}{\hat{N}-\hat{t}_0-3k-1} , \text{ and}\\
U_k&:=\left(\frac{2V_f}{\gamma}(1-\min(\gamma,1))^{k+1}+\frac{4V_f}{\gamma^2}\frac{2k+1}{(\hat{N}-\hat{t}_0-k)^2}\right)\cdot \frac{\hat{N}-\hat{t}_0-k}{\hat{N}-\hat{t}_0-3k-1}.
\end{align*}
For stationary non-reversible chains, for any $k\ge 1$,
\begin{equation}\label{hatsigmaknonrevexpectationbound}
|\E_{\pi}(\hat{\sigma}^2(k))-\sigma^2|\le W_k,
\end{equation}
with 
\[W_k:=\frac{4V_f}{\gammaps}(1-\gammaps)^{(k+1-1/\gammaps)/2}+\frac{16V_f}{\gammaps^2}\frac{2k+1}{(\hat{N}-\hat{t}_0-k)^2}.\]
\end{proposition}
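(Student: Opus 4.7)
The plan is to compute $\E_{\pi}\hat{\sigma}^2(k)$ in closed form under stationarity, then decompose the bias into a truncation piece and a variance-of-empirical-mean piece. Write $M:=\hat{N}-\hat{t}_0$, $n:=M-k$, $\bar{f}_n:=n^{-1}\sum_{j=1}^{n}f(X_j)$, $\gamma_i:=\Cov_{\pi}(f(X_1),f(X_{1+i}))$, and $C:=(M+k+1)/(M-k)$. Under stationarity the two squared empirical averages appearing in the definition of $\hat{\gamma}_i$ are identically distributed, each with second moment $(\E_{\pi}f)^2+\Var_{\pi}(\bar{f}_n)$, while the cross term satisfies $\E_{\pi}f(X_j)f(X_{j+i})=\gamma_i+(\E_{\pi}f)^2$. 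Hence $\E_{\pi}\hat{\gamma}_i=\gamma_i-\Var_{\pi}(\bar{f}_n)$, so, using $\sigma^2=V_f+2\sum_{i\ge 1}\gamma_i$,
\begin{equation*}
\sigma^2-\E_{\pi}\hat{\sigma}^2(k)=-\frac{(2k+1)\sigma^2}{M-k}+2C\sum_{i>k}\gamma_i+(2k+1)\,C\,\Var_{\pi}(\bar{f}_n).
\end{equation*}

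\textbf{Variance-correction piece.} I would then substitute $\Var_{\pi}(\bar{f}_n)=\sigma^2/n+r$ where, by Theorem~\ref{Chebrevthm} (reversible) resp.\ Theorem~\ref{Chebnonrevthm} (non-reversible), $|r|\le 4V_f/(\gamma^2 n^2)$ resp.\ $16V_f/(\gammaps^2 n^2)$. The $\sigma^2/n$ contribution combines with the leading $-\sigma^2(2k+1)/(M-k)$; bounding $\sigma^2\le 2V_f/\gamma$ (resp.\ $4V_f/\gammaps$) from the remark after the Markov chain CLT, and invoking the algebraic identity
\begin{equation*}
(M+k+1)(M-3k-1)=(M-k)^2-(2k+1)^2,
\end{equation*}
lets one absorb both this combined residual and the $(2k+1)Cr$ error into the expression $\frac{M-k}{M-3k-1}\cdot\frac{4V_f(2k+1)}{\gamma^2(M-k)^2}$, which is the second summand of the brackets in $L_k,U_k$ (and its pseudo-gap analogue in $W_k$). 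The same identity simultaneously lets one replace $C$ by the upper bound $(M-k)/(M-3k-1)$ in front of the truncation term $\epsilon_k:=2\sum_{i>k}\gamma_i$.

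\textbf{Truncation piece.} It remains to bound $\epsilon_k$. For reversible chains the self-adjoint operator $\mtx{P}$ restricted to $\{1\}^{\perp}\subset L_2(\pi)$ admits a spectral resolution, so $\gamma_i=\int_{-(1-\gamma^*)}^{1-\gamma}\lambda^i\,dE_f(\lambda)$ for some positive measure $E_f$ of mass $V_f$, and the geometric tail yields $\sum_{i>k}\gamma_i=\int \lambda^{k+1}/(1-\lambda)\,dE_f$. For \emph{even} $k$ the integrand has the same sign as $\lambda$, which is why the statement separates the two directions: the positive-$\lambda$ part is at most $(1-\min(\gamma,1))^{k+1}/\gamma$ and contributes only to the upward direction (giving $U_k$), while the negative-$\lambda$ part has magnitude at most $(1-\gamma^*)^{k+1}$ (since $1-\lambda\ge 1$ on $\lambda<0$) and contributes only to the downward direction (giving $L_k$); the trivial alternative $|\gamma_i|\le V_f$ via Cauchy--Schwarz accounts for the $\min$ with $V_f$ in $L_k$. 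For non-reversible chains one has no such resolution, so instead I would iterate the pseudo-spectral-gap inequality $\|(\mtx{P}^*)^m \mtx{P}^m g\|_{L_2(\pi)}^2\le(1-m\gammaps)\|g\|^2$ with $g:=f-\E_{\pi}f$ and $m$ the optimizer in \eqref{gammapsdef}, combine this with Cauchy--Schwarz $|\gamma_i|\le\|g\|\cdot\|\mtx{P}^i g\|_{L_2(\pi)}$ and the contraction $\|\mtx{P}\|_{L_2(\pi)}\le 1$ to interpolate to arbitrary $i$, and sum the resulting geometric series to obtain the $(1-\gammaps)^{(k+1-1/\gammaps)/2}$ factor of $W_k$. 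The interpolation step is the principal technical obstacle: since the pseudo-spectral-gap contraction acts on blocks of length $m=1/\gammaps$ rather than on $\mtx{P}$ itself, bridging between adjacent blocks forces the awkward $-1/\gammaps$ offset in the exponent of $W_k$; once this decay is in hand, the rest is the same algebraic bookkeeping as in the reversible case.
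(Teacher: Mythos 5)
Your route is essentially the paper's: the exact identity $\E_{\pi}\hat{\gamma}_i=\gamma_i-\Var_{\pi}(\bar{f}_n)$ under stationarity, Theorems \ref{Chebrevthm} and \ref{Chebnonrevthm} to replace $\Var_{\pi}(\bar{f}_n)$ by $\sigma^2/n$ up to an error of $4V_f/(\gamma^2n^2)$ (resp.\ $16V_f/(\gammaps^2n^2)$), and a spectral bound on the tail $2\sum_{i>k}\gamma_i=2\left<f,(\mtx{P}-\mtx{\pi})^{k+1}(\mtx{I}-(\mtx{P}-\mtx{\pi}))^{-1}f\right>_{\pi}$ that exploits the sign split of the spectrum via the oddness of $k+1$. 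Your spectral-resolution computation is exactly the paper's Appendix lemma, and the non-reversible decay $\|(\mtx{P}-\mtx{\pi})^{k+1}\|_{2,\pi}\le(1-\gammaps)^{(k+1-1/\gammaps)/2}$ that you propose to rederive by block-iteration is simply quoted there from the proof of Theorem 3.7 of \cite{Martoncoupling}, so nothing new is needed for that "principal technical obstacle."

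Two points do not hold as written. First, the $\min$ with $V_f$ in $L_k$ cannot come from "the trivial alternative $|\gamma_i|\le V_f$": summing that bound over $i>k$ diverges. The correct source, available inside your own spectral-measure setup, is that on the negative spectrum $|\lambda|^{k+1}/(1-\lambda)\le|\lambda|/(1+|\lambda|)\le 1/2$, so the downward contribution of $2\int_{\lambda<0}\lambda^{k+1}(1-\lambda)^{-1}\,dE_f(\lambda)$ has magnitude at most $V_f$; this is the paper's estimate $\|(\mtx{P}-\mtx{\pi})_-^{k+1}(\mtx{I}-(\mtx{P}-\mtx{\pi}))^{-1}\|_{2,\pi}\le 1/2$. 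Second, your claim that the leftover term $(2k+1)^2\sigma^2/(M-k)^2$ is "absorbed" into the summand $4V_f(2k+1)/(\gamma^2(M-k)^2)$ is not supported by an inequality: the only slack comes from replacing $C=(M+k+1)/(M-k)$ by $(M-k)/(M-3k-1)$ in front of the $r$-term, and that slack is of order $k^3/(M-k)^4$, far smaller than the residual once $k\gtrsim 1/\gamma$. To be fair, the paper's own proof elides the same point (it divides by $(M-3k-1)/(M-k)$, which is the reciprocal of a constant slightly different from the actual normalization $(M+k+1)/(M-k)$ in the definition of $\hat{\sigma}^2(k)$), and the discrepancy is asymptotically negligible for $k=O(\hat{N}^{1/3})$; but you should flag it as a loose end rather than present the accounting as closed.
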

\begin{proposition}\label{sigma2kerrorbound}
Suppose that $f:\Omega\to \R$ satisfies that $\sup_{x\in \Omega}|f(x)-\E_{\pi} f|\le C$ for some finite $C$. In the case of stationary, uniformly ergodic chains, we have for any $t\ge 0$,
\begin{equation}\label{eqconcsigmak}
\PP_{\pi}(|\hat{\sigma}^2(k)-\E_{\pi}(\hat{\sigma}^2(k))|\ge t)\le 2\exp\left(\frac{-t^2(\hat{N}-\hat{t}_0-3k-1)}{512(2k+1)^2C^4 \tmix}\right),
\end{equation}
This implies that for uniformly ergodic reversible chains, with arbitrary initial distribution $q$, for even $k\ge 2$, any $t\ge 0$,
\begin{equation}\label{eqconcsigmakrev}
\PP_{q}\left(\sigma^2-\hat{\sigma}^2(k)\ge U_k+t\right)
\le \exp\left(\frac{-t^2(\hat{N}-\hat{t}_0-3k-1)}{512(2k+1)^2C^4\tmix}\right)+E(\hat{t}_0),
\end{equation}
and for uniformly ergodic non-reversible chains, for any $k\ge 1$, $t\ge 0$,
\begin{equation}\label{eqconcsigmaknonrev}
\PP_{q}\left(\sigma^2-\hat{\sigma}^2(k)\ge W_k+t\right)
\le \exp\left(\frac{-t^2(\hat{N}-\hat{t}_0-3k-1)}{512(2k+1)^2C^4\tmix}\right)+E(\hat{t}_0).
\end{equation}
\end{proposition}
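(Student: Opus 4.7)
The plan is to prove the three assertions in sequence: first establish the concentration of $\hat{\sigma}^2(k)$ around its mean \eqref{eqconcsigmak} under stationarity; then combine this with the bias estimates from Proposition \ref{sigma2kbias} and a coupling argument to obtain \eqref{eqconcsigmakrev} and \eqref{eqconcsigmaknonrev}.

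For \eqref{eqconcsigmak}, view $\hat{\sigma}^2(k)$ as a function $g(X_{\hat{t}_0+1},\ldots,X_{\hat{N}})$ and apply a McDiarmid/bounded-differences inequality for uniformly ergodic Markov chains (whose key feature is that the sub-Gaussian parameter scales linearly with $\tmix$; this is the type of result provided in \cite{Martoncoupling}). Since the assumption is $|f(x)-\E_\pi f|\le C$ and both the asymptotic variance and each $\hat{\gamma}_i$ are invariant under shifting $f$ by a constant, I would first replace $f$ by $f-\E_\pi f$ so that $|f|\le C$. Counting carefully: each coordinate $X_j$ appears in at most $2k+2$ of the product terms $f(X_{j'})f(X_{j'+i})$ that build $\hat{\gamma}_0+2\sum_{i=1}^k\hat{\gamma}_i$ (one for each lag $i\in\{-k,\ldots,k\}$, and again through the centered squared means). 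Each such term is bounded by $C^2$ and is divided by $\hat{N}-\hat{t}_0-k$, and the overall factor $(\hat{N}-\hat{t}_0+k+1)/(\hat{N}-\hat{t}_0-k)$ is $\le 1+O(k/(\hat{N}-\hat{t}_0-k))$. This yields bounded-differences coefficients $c_j=O\bigl(C^2(2k+1)/(\hat{N}-\hat{t}_0-3k-1)\bigr)$. Summing $c_j^2$ over the $\hat{N}-\hat{t}_0$ coordinates and feeding the result into the Markov-chain McDiarmid inequality gives precisely the stated Gaussian tail with $\tmix$ in the denominator; the numeric constant $512$ is absorbed in the book-keeping.

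For \eqref{eqconcsigmakrev}, decompose
\begin{equation*}
\sigma^2-\hat{\sigma}^2(k)=\bigl(\sigma^2-\E_\pi\hat{\sigma}^2(k)\bigr)+\bigl(\E_\pi\hat{\sigma}^2(k)-\hat{\sigma}^2(k)\bigr).
\end{equation*}
By \eqref{hatsigmakrevexpectationbound} of Proposition \ref{sigma2kbias}, the first summand is at most $U_k$, so
\begin{equation*}
\{\sigma^2-\hat{\sigma}^2(k)\ge U_k+t\}\subset\{\E_\pi\hat{\sigma}^2(k)-\hat{\sigma}^2(k)\ge t\},
\end{equation*}
and under stationary initialization the one-sided version of \eqref{eqconcsigmak} controls the right-hand event (losing the factor of $2$). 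To pass from stationary initialization to an arbitrary initial distribution $q$, use a maximal coupling between $q\mtx{P}^{\hat{t}_0}$ and $\pi$: the coupled chains can be extended so that $(X_{\hat{t}_0+1},\ldots,X_{\hat{N}})$ coincides in the two constructions except on an event of probability $\dtv(q\mtx{P}^{\hat{t}_0},\pi)=E(\hat{t}_0)$, which contributes the additive $E(\hat{t}_0)$ term. The inequality \eqref{eqconcsigmaknonrev} is obtained identically, replacing $U_k$ by the non-reversible bias bound $W_k$ from \eqref{hatsigmaknonrevexpectationbound}.

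The main obstacle will be the bounded-differences step: one must carefully account for how many sums each $X_j$ affects, and track the contribution of the subtracted squared averages inside each $\hat{\gamma}_i$ (which also depend on all coordinates in the relevant window). Centering $f$ is essential, because otherwise the bound would involve $\|f\|_\infty^4$ rather than $C^4$. Once the bounded differences are pinned down, the application of the Markov-chain concentration result and the subsequent coupling lift are routine.
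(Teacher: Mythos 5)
Your proposal is correct and follows essentially the same route as the paper's proof: center $f$, establish the Hamming--Lipschitz (bounded-differences) property of $\hat{\sigma}^2(k)$ with coefficients of order $(2k+1)C^2/(\hat{N}-\hat{t}_0-3k-1)$, apply the Markov-chain version of McDiarmid's inequality (Corollary 2.9 of \cite{Martoncoupling}) to get \eqref{eqconcsigmak}, and then combine the one-sided tail with the bias bounds $U_k$, $W_k$ and the burn-in coupling that produces the $E(\hat{t}_0)$ term. The only difference is that you spell out the maximal-coupling step for passing from $\PP_\pi$ to $\PP_q$, which the paper leaves implicit.
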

\begin{remark}
It is clear that if we increase $k$, the bias 
$|\sigma^2-\E_{\pi}(\hat{\sigma}^2(k))|$ becomes smaller, but the concentration bounds become weaker. With the choice
\begin{equation}\label{hatsigma2def}
\hat{t}_0:=\lfloor 0.1 \hat{N}\rfloor, \hspace{2mm} k:=10\cdot \left\lfloor \hat{N}^{1/3}\right\rfloor, \hspace{2mm} \hat{\sigma}^2:=\hat{\sigma}^2(k),
\end{equation}
our bounds imply that for bounded functions, $\hat{\sigma}^2$ will be a consistent estimate of $\sigma^2$ as $\hat{N}\to \infty$, for any uniformly ergodic Markov chain, irrespectively of the value of the mixing time. This consistency also holds under very mild conditions, without assuming uniform ergodicity, see Theorem 1 of \cite{FlegalJones}. 
\end{remark}

\subsection{Estimation of the spectral gap and the mixing time}\label{sectionestimatespectralgap}
In this section, we will state some estimators for the spectral gap, pseudo spectral gap, and mixing time of the Markov chain.

A key parameter appearing in the concentration inequalities for reversible chains is the \emph{spectral gap} of the Markov chain, denoted $\gamma$. 
It is known that for reversible chains, the existence of spectral gap ($\gamma>0$) is equivalent to 
the geometric ergodicity of the chain (see \cite{Robertsgeneral}). Now we briefly review an iterative method for estimating the spectral gap, introduced in \cite{gyori2014hypothesis}.
For simplicity, we are going to assume that $\Omega=\R^{d}$ (but the method can also be simply adapted to other state spaces as well).  The method is based on the fact that under mild conditions on the function $f:\Omega\to \R$, 
\[\lim_{k\to \infty}(\Cov_{X_0\sim \pi}(f(X_0),f(X_k))/\Var_{\pi}(f))^{1/k}=1-\gamma^*,\]
with $\gamma^*\le \gamma$ denoting the absolute spectral gap. We do the following steps.
\begin{enumerate}
\item Run an initial simulation  of length $n$ yielding  values $X_1,\ldots, X_n$. In every step $1\le i\le n$, save each component $X_i^{1},\ldots, X_i^{d}$.
\item Set $\eta=1$, and for every $1\le k\le d$, compute
\begin{equation}\label{eq:gammahat}
\hat{\gamma}_{\eta, k} := 1-(\hat{\rho}_{\eta,k}/\hat{V}_{k})^{1/\eta},
\end{equation}
where
\begin{align}
\hat{V}_k&:= \frac{1}{n} \sum_{i=1}^{n}X_{i}^{k} - \left(\frac{1}{n} \sum_{i=1}^n X_{i}^{k}\right)^2 \\
\hat{\rho}_{\eta, k}(f) &:= \frac{1}{n-\eta}\sum_{i=1}^{n-\eta}\left(X_{i}^{k}-
\frac{1}{n-\eta}\sum_{j=1}^{n-\eta}X_j^{k}
\right)\left(X_{i+\eta}^{k}-
\frac{1}{n-\eta}\sum_{j=1}^{n-\eta}X_{j+\eta}^{k}
\right).
\end{align}

We call the minimum of $\hat{\gamma}_{\eta, 1},\ldots, \hat{\gamma}_{\eta, d}$ by $\hat{\gamma}_{\min}(1)$, and compute 
\begin{equation}\label{eq:eta1def}\eta(1):=\frac{\log(n\hat{\gamma}_{\min}(1))}{4\log(1/(1-\hat{\gamma}_{\min}(1)))}.\end{equation}
\item Inductively suppose that we have already computed $\eta(j)$ for $j\ge 1$, using \eqref{eq:eta1def}. Now compute $\gamma_{\min}(j+1)$ based on \eqref{eq:gammahat} using $\eta=\eta(j)$. If $\hat{\gamma}_{\min}(j+1)\ge \hat{\gamma}_{\min}(j)$, stop, and let $\hat{\gamma}:=\hat{\gamma}_{\min}(j)$. Otherwise compute $\eta(j+1)$ and repeat this step.
\item To make sure that there is a sufficient amount of initial data, accept the estimate only if $n$ satisfies $n>100/\hat{\gamma}$, otherwise choose $n=200/\hat{\gamma}$ and restart from Step 2.
\end{enumerate}
For non-reversible chains, the \emph{pseudo spectral gap} $\gammaps:=\max_{k\ge 1}\gamma((P^*)^k P^k)/k$ can be estimated as follows. We define our estimate as
\begin{equation}\label{eq:hatgammapseq}\hat{\gamma}_{\mathrm{ps}}=\max_{k\ge 1}\hat{\gamma}((P^*)^{2^k} P^{2^k})/2^k,\end{equation}
where for each $k$, we use the estimate $\hat{\gamma}$ from the previous paragraph (so that we run a Markov chain with kernel $(P^*)^{2^k} P^{2^k}$). We only use the powers of 2 for faster computation. By the definition of $\gammaps$, this estimate is expected to be less than or equal to $\gammaps$.
We do not need infinitely many evaluations in $k$ for computing \eqref{eq:hatgammapseq}, since $\gamma((P^*)^{2^k} P^{2^k})/2^k\le 2^{-k}$, so we can stop whenever $2^{-k}$ gets smaller than the maximum of the first $k-1$ terms. 

In our simulations, we have found that for geometrically ergodic chains that mix well, the estimators $\hat{\gamma}$ and $\hat{\gamma}_{\mathrm{ps}}$ converge rather fast even for relatively small sample sizes. If the chain is not geometrically ergodic, then these estimators tend to be very small, and they do converge to $0$ as the sample size increases. Geometric ergodicity can be typically ensured by restricting the state space to a compact set.

Based on Proposition \ref{tmixlambdaprop}, for finite state Markov chains, $\tmix$ can be bounded as
\begin{align*}
\tmix\le \frac{2\log(2) + \log(1/\pi_{\min})}{2\gamma^*}, \quad \tmix\le \frac{1+2\log(2) + \log(1/\pi_{\min})}{\gammaps},
\end{align*}
with $\pi_{\min}=\min_{x\in \Omega}\pi(x)$. In practice, it is usually easy to get lower bounds on $\pi_{\min}$, for example, if $\pi(x)\sim \exp(h(x))$, then $\pi_{\min}\ge \frac{1}{|\Omega|}\cdot \exp(-(\max_{x\in \Omega} {h(x)}-\min_{x\in \Omega} {h(x)}))$. Assuming that $\ul{\pi_{\min}}\le \pi_{\min}$ is a lower bound for $\pi_{\min}$, we propose the estimators
\begin{align}
\hat{t}_{\mathrm{mix}}^{\text{ finite state}}&:= \frac{2\log(2) + \log(1/\ul{\pi_{\min}})}{2\hat{\gamma}} \text{ for reversible chains, and }\\
\hat{t}_{\mathrm{mix}}^{\text{ finite state}}&:=\frac{1+2\log(2) + \log(1/\ul{\pi_{\min}})}{\hat{\gamma}_{\mathrm{ps}}}\text{ for non-reversible chains}.
\end{align}
For general state Markov chains, we need to assume uniform ergodicity for the mixing time to be finite. This can be usually achieved by restricting the state space $\Omega$ to a compact set, and forbidding moving outside of this set. The change in $\E_{\pi}(f)$ by this modification is typically easy to bound by bounding the tails of the distribution $\pi$.

Now we propose a new estimate of the mixing time for general state space Markov chains. Suppose that when starting from $x$, with probability $1-a(x)$, we stay in place, and with probability $a(x)$, we move according to some proposal distribution $q(x,\cdot)$ (this is typical for Metropolis-Hastings chains). Then the Markov kernel equals
$P(x,\cdot)=a(x) q(x,\cdot)+ (1-a(x))\delta_x$. Suppose that $q(x)$ is supported on some measurable set $\Omega_0^{x}\subset \Omega$,
and that both $\pi$ and $q(x,\cdot)$ are absolutely continuous with respect to a distribution $\mu$ on $\Omega$ (typically chosen as the uniform distribution). Then based on \eqref{Et0Nqbound} and 
\eqref{Nqbound} it is straightforward to show (see Section \ref{secmixingtimebound} of the Appendix for details) that for reversible chains,
\begin{align}
&\label{eqtmixboundgen}\tmix\le 1+\frac{2}{\inf_{x\in \Omega}a(x)}+\frac{2}{\gamma}+\frac{1}{2 \gamma}\cdot\\
\nonumber&\cdot\sup_{x\in \Omega}\left(\sup_{y\in \Omega_0^{x}}\log\frac{\mathrm{d}\, \pi}{\mathrm{d}\, \mu}(x)-\inf_{y\in \Omega_0^{x}}\log\frac{\mathrm{d}\, \pi}{\mathrm{d}\, \mu}(x) +\sup_{y\in \Omega_0^{x}}\log\frac{\mathrm{d}\, q}{\mathrm{d}\, \mu}(x)-\inf_{y\in \Omega_0^{x}}\log\frac{\mathrm{d}\, q}{\mathrm{d}\, \mu}(x) +\log(1/\pi(\Omega_0^{x}))\right),
\end{align}
and the same bound holds for non-reversible chains as well when $\gamma$ is replaced by $\gammaps/2$. Based on this, for reversible, general state space chains, we propose the estimate
\begin{align}\label{eqtmixestgeneral}&\hat{t}_{\mathrm{mix}}:= 1+\frac{2}{\inf_{x\in \Omega}a(x)}+\frac{2}{\hat{\gamma}}+\frac{1}{2\hat{\gamma}}\cdot\\
\nonumber&\cdot\sup_{x\in \Omega}\left(\sup_{y\in \Omega_0^{x}}\log\frac{\mathrm{d}\, \pi}{\mathrm{d}\, \mu}(x)-\inf_{y\in \Omega_0^{x}}\log\frac{\mathrm{d}\, \pi}{\mathrm{d}\, \mu}(x) +\sup_{y\in \Omega_0^{x}}\log\frac{\mathrm{d}\, q}{\mathrm{d}\, \mu}(x)-\inf_{y\in \Omega_0^{x}}\log\frac{\mathrm{d}\, q}{\mathrm{d}\, \mu}(x) +\log(1/\pi(\Omega_0^{x}))\right),\end{align}
and for non-reversible chains, we propose the same except with $\hat{\gamma}$ replaced by $\hat{\gamma}_{\mathrm{ps}}/2$.

\section{Methodological considerations for MCMC runs}\label{methodologicalcons}
In this section, we explain how to apply the bounds of Section \ref{sect:concentrationbounds} when we use subsampling or make several parallel runs instead of a single run.
\subsection{Subsampling}
\label{subsampling}
The average $Z:=\left(\sum_{i=t_0+1}^N f(X_i)\right)/(N-t_0)$ is not the only possible way to approximate $\E_{\pi} f$. We may decide to only average in every $m$th step. Assume, without loss of generality, that 
\begin{equation}
N=nm\text{ and }t_0=t_0' m.
\end{equation}
Let $X_1':=X_{m},X_2':=X_{2m}, \ldots , X_n':=X_{n\cdot m}$, and 
\begin{equation}\label{Zpdef}
Z':=\frac{\sum_{i=t_0'+1}^n f(X_i')}{n-t_0'}.
\end{equation}
Then $X_1',\ldots,X_n'$ is a Markov chain, which is reversible if the original chain was reversible, therefore the concentration inequalities from the previous sections are applicable to $Z'$.

The optimal choice of the spacing $m$ was discussed in \cite{geyer1992markov}, which we summarise here as follows. Let $\rho_{i}=\Cov_{X_0\sim \pi}(f(X_0),f(X_i))$, then the asymptotic variance of $Z$ can be written as $\sigma^2=\rho_0+2\sum_{i=1}^{\infty}\rho_i$, and the asymptotic variance of $Z'$ becomes $\sigma^2_m:=\rho_0+2\sum_{i=1}^{\infty}\rho_{i\cdot m}$. If evaluating the function $f$ takes $r$ times more computational time than making an MCMC step, then the optimal $m$ minimises the variance/time ratio $\frac{\sigma^2_m/(N/m)}{(N/m)\cdot r+N}$ (in practice, estimators can be used for $\sigma^2_m$). This can result in considerable speedup, especially when $f$ is much more expensive to evaluate than time it takes to make a step in the Markov chain.

\subsection{Parallel runs}
\label{parallelruns}
In this section, first we will show how we can apply the bounds from the previous sections when averaging over several parallel runs instead of a single run. 
\begin{proposition}[Parallel runs]\label{parallelrunsprop}
Suppose that we have $m$ independent parallel chains $X^{(1)}, \ldots, X^{(m)}$ of length $N$, i.e.\ for $1\le i\le m$, $X^{(i)}=\left(X_{0}^{(i)},\ldots,X_N^{(i)}\right)$ are independent time homogenous Markov chains with initial distribution $X_{0}^{(i)}\sim q$, Polish state space $\Omega$, and stationary distribution $\pi$. We denote by $t_0\ge 0$ the burn-in time. Let $f:\Omega\to \R$, and define the empirical average
\begin{equation}
Z^{(m)}:=\frac{1}{m(N-t_0)}\sum_{i=1}^{m}\sum_{j=t_0+1}^{N}f\left(X_{j}^{(i)}\right).
\end{equation}
Then Theorems \ref{Chebrevthm}, \ref{thmBernsteinrev}, \ref{Chebnonrevthm}, and \ref{NonReversibleBernsteinthmpseudo} apply to $Z^{(m)}$ in the place of $Z$ as well, except that we need to replace $N-t_0$ by $m(N-t_0)$, and $E(t_0)$ by $mE(t_0)$ in each case.
\end{proposition}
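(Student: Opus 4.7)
The plan is to reduce Proposition \ref{parallelrunsprop} to the single-chain theorems by a coupling step followed by an independence argument. For each $i = 1,\ldots,m$ I will couple $X^{(i)}$ with an auxiliary chain $\tilde X^{(i)}$ driven by the same kernel $P$ but started from the stationary distribution $\pi$: using the maximal coupling of $q\mtx{P}^{t_0}$ and $\pi$ at time $t_0$ and running both processes with a common source of randomness thereafter, one obtains a joint law under which $X^{(i)}_j = \tilde X^{(i)}_j$ for every $j \ge t_0$ with probability at least $1 - E(t_0)$ by the definition \eqref{dtvdef2} of total variation distance. Taking these $m$ couplings to be mutually independent, a union bound gives that all of them succeed simultaneously with probability at least $1 - mE(t_0)$, and on that event the empirical average $Z^{(m)}$ coincides with
\[
\tilde Z^{(m)} \;:=\; \frac{1}{m}\sum_{i=1}^m \tilde Z^{(i)}, \qquad \tilde Z^{(i)} \;:=\; \frac{1}{N-t_0}\sum_{j=t_0+1}^N f(\tilde X^{(i)}_j),
\]
where $\tilde Z^{(1)},\ldots,\tilde Z^{(m)}$ are i.i.d.\ copies of the single-chain stationary-start empirical average. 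Therefore
\[
\PP_q\bigl[|Z^{(m)} - \E_{\pi} f| \ge t\bigr] \;\le\; \PP\bigl[|\tilde Z^{(m)} - \E_{\pi} f| \ge t\bigr] \;+\; m E(t_0),
\]
and the remaining task is to control the first term under the assumption that every $\tilde X^{(i)}$ is stationary and the $\tilde X^{(i)}$'s are mutually independent.

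For the Chebyshev-type theorems (\ref{Chebrevthm} and \ref{Chebnonrevthm}), independence of $\tilde Z^{(1)},\ldots,\tilde Z^{(m)}$ gives $\Var(\tilde Z^{(m)}) = m^{-1}\Var(\tilde Z^{(1)})$, and the stationary-start variance estimates \eqref{varempboundrev} and \eqref{varempboundnonrev} control $\Var(\tilde Z^{(1)})$; plugging these into Chebyshev's inequality produces a tail bound in which the effective sample size $N-t_0$ has been enlarged by a factor of $m$. For the Bernstein-type theorems (\ref{thmBernsteinrev} and \ref{NonReversibleBernsteinthmpseudo}), I will invoke the Chernoff method. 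The proofs in \cite{Martoncoupling} establish a moment generating function bound of the form $\E[\exp(\lambda S^{(1)})] \le \exp((N-t_0)\,\psi(\lambda))$, where $S^{(1)} := \sum_{j=t_0+1}^N(f(\tilde X^{(1)}_j) - \E_{\pi} f)$ and $\psi(\lambda)$ does not depend on $N-t_0$. Independence then gives
\[
\E\Bigl[\exp\bigl(\lambda\,(S^{(1)}+\cdots+S^{(m)})\bigr)\Bigr] \;=\; \prod_{i=1}^m \E\bigl[\exp(\lambda S^{(i)})\bigr] \;\le\; \exp\bigl(m(N-t_0)\,\psi(\lambda)\bigr),
\]
and, writing $\tilde Z^{(m)} - \E_{\pi} f = (S^{(1)}+\cdots+S^{(m)})/(m(N-t_0))$, optimizing $\lambda$ in Markov's inequality exactly as in the single-chain proofs recovers the Bernstein exponent with $N-t_0$ replaced by $m(N-t_0)$; the two-sided factor of $2$ arises from applying the bound to each tail separately.

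The main obstacle is the structural linearity of the single-chain log-MGF in the length parameter: the clean substitution $N-t_0 \mapsto m(N-t_0)$ in the Bernstein exponent only comes out correctly if the per-step rate $\psi(\lambda)$ produced by the Marton-coupling argument does not itself depend on $N-t_0$. Once this linearity is confirmed by inspection of the MGF estimates underlying Theorems \ref{thmBernsteinrev} and \ref{NonReversibleBernsteinthmpseudo} in \cite{Martoncoupling}, the maximal-coupling step, the union bound, and the variance-additivity and MGF-factorization calculations complete the proof routinely.
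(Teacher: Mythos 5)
Your proposal is correct and follows essentially the same route as the paper's (one-sentence) proof: independence of the parallel chains yields variance additivity for the Chebyshev bounds and factorization of the moment generating function for the Bernstein bounds, which are then combined with the single-chain variance and MGF estimates from \cite{Martoncoupling}. Your maximal-coupling-plus-union-bound treatment of the burn-in term, giving $mE(t_0)$, simply makes explicit a step the paper leaves implicit, and your flagged concern about linearity of the log-MGF in the chain length is the right thing to verify against the cited estimates.
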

\begin{remark}\label{parallelpropremark}
If we choose $t_0$ sufficiently large, then $mE(t_0)$ becomes negligible. In this case, our Proposition says that the bounds for running $m$ chains of length $N$ are equivalent for running a single chain of length $t_0+m(N-t_0)$. Since typically we should choose $N\gg t_0$, this means that there is almost no difference in the bounds when  having a single run or several parallel runs as long as the total number of steps is the same. 
\end{remark}
\begin{proof}
The proof is a simple consequence of the following facts: for independent random variables $Y_1,\ldots,Y_m$, we have $\Var(Y_1+\ldots+Y_m)=\Var(Y_1)+\ldots+\Var(Y_1)$, and
$\E(\exp(\theta(Y_1+\ldots+Y_m)))=\E(\exp(\theta Y_1))\cdot \ldots \cdot \E(\exp(\theta Y_m))$.  The result follows using these together with the variance and moment generating function estimates (which are given in \cite{Martoncoupling}).
\end{proof}

\section{Simulations}
\label{sectsimulations}
In the following, we present simulation results to demonstrate the applicability of the introduced error bounds. We are interested in the empirical tail probabilities of estimates, obtained from multiple runs of MCMC simulations. In particular, we will estimate logarithms of tail probabilities of the form
\begin{equation}
\label{eqlogtail}
\log\left(\PP\left(\frac{\sum_{i=t_0+1}^N f(X_i)}{N-t_0}\ge \E_{\pi} f+t\right)\right).
\end{equation}
We simulate $m$ parallel chains started from the same initial distribution $q$, and denote the sequence of states of the $j$th chain ($1 \le j \le m$) by $X_1^{(j)},\ldots,X_N^{(j)}$. 
Then the empirical average obtained by the $j$th chain can be written as
\begin{equation}
\widehat{E}^{(j)}:=\frac{\sum_{i=t_0+1}^{N}f\left(X_i^{(j)}\right)}{N-t_0},
\end{equation}
and denote 
\begin{equation}
\widehat{E}:=\frac{1}{m} \sum_{j=1}^m \widehat{E}^{(j)}.
\end{equation}
We define the \emph{mean-shifted empirical distribution} of these estimates as
\begin{equation}
\widehat{F}(t):=\frac{1}{m}\sum_{j=1}^{m} \II[\widehat{E}^{(j)}-\widehat{E} \le t],
\end{equation}
and let 
\begin{equation}
\label{eqlogtailest}
\widehat{L}(t):= \begin{cases} 
\log\left(\widehat{F}(t)\right) & \text{ for }  t<0 \text{, and } \\
\log\left(1-\widehat{F}(t)\right) & \text{ for } t\ge 0 ,
\end{cases}
\end{equation}
thus $\widehat{L}(t)$ is an estimate of the log tails in \eqref{eqlogtail}.
By the strong law of large numbers, one can see that assuming $X^{(j)}_{t_0+1}\sim \pi$, $\widehat{E}\to \E_{\pi} f$  $\pi$-almost surely as $m\to \infty$. Although $X^{(j)}_{t_0+1}\sim \pi$ does not hold in general, by coupling $X^{(j)}_{t_0+1}$ with a stationary chain, we obtain that
\[\lim\sup_{m\to \infty}\left|\hat{F}(t)-\PP\left(\frac{\sum_{i=t_0+1}^N f(X_i)}{N-t_0}\ge \E_{\pi} f+t\right)\right|\le \dtv(q\mtx{P}^{t_0+1},\pi)\]
$\pi$-almost surely, where $q$ denotes the initial distribution ($X_0\sim q$). Thus if we choose $t_0$ and $m$ sufficiently large, then $\widehat{L}(t)$ estimates well the logarithm of the tail probabilities \eqref{eqlogtail}.

\subsection{Logistic regression}
The space shuttle Challenger exploded during takeoff in 1986, killing all 7 passengers aboard. The weather on the day of the launch was unusually cold, and this was suspected to increase the chance of the failure of the O-ring component. Table 1.1 of \cite{RobertsCasella} shows 23 launch experiments at different temperatures. It is reasonable to try to apply logistic regression to model the dependence of failure on temperature (see Examples 1.13 and 7.11 of \cite{RobertsCasella}). 

Let $Y$ be a random binary response variable, taking values $0$ or $1$ depending on some explanatory variable $x\in \R^p$. Logistic regression models the distribution of $Y$ as
\begin{equation}
\PP(Y=1)=\frac{\exp(\alpha+\left<\beta,x\right>)}{1+\exp(\alpha+\left<\beta,x\right>)},
\end{equation}
where $\alpha\in \R, \beta\in \R^p$ are parameters, and $\left<\cdot,\cdot\right>$ denotes the Euclidean scalar product.

The likelihood of parameters $\alpha, \beta$ given the data $\mathbf{y}$ is
\[L(\alpha,\beta|\mathbf{y})= \prod_{i=1}^{n} \left(\frac{\exp(\alpha+\beta x_i)}{1+\exp(\alpha+\beta x_i)}\right)^{y_i}\left(\frac{1}{1+\exp(\alpha+\beta x_i)}\right)^{1-y_i},\]
where $x_i$ denotes the temperature at the $i$th trial (in Fahrenheit), and $y_i$ denotes the indicator function of the O-ring failure. We choose the prior as $\pi_{\alpha}(\alpha|b)\pi_{\beta}(\beta)=\frac{1}{b}\exp{\alpha} e^{-e^{\alpha}/b}\mathrm{d}\alpha\mathrm{d}\beta$,
which puts an exponential prior on $e^{\alpha}$ and a flat prior on $\beta$, and ensures that the posterior distribution is proper. The parameter $b$ is chosen in a data dependent way as $b=\exp{\hat{\alpha}+\gamma}$, where $\hat{\alpha}$ is the MLE of $\alpha$, and $\gamma\approx 0.577216$ is Euler's constant.

In order to explore this posterior distribution, we use a random walk Metropolis sampler, with normal proposals, having covariance matrix $\left(\begin{matrix}4 & 0\\ 0 & 10^{-3}\end{matrix}\right)$ (these values were obtained after some tuning). We have estimated the spectral gap according to the method of Section \ref{sectionestimatespectralgap}, which yielded $\hat{\gamma}=6.03\cdot 10^{-2}$. We have analyzed the probability of failure at temperatures $30 ^oF$ and $95 ^oF$, denoted by functions $p_{30}(\alpha,\beta)$ and $p_{95}(\alpha,\beta)$. 

The initial distribution was chosen as a uniform distribution on the set 
$\Omega_0=[\hat{\alpha}-0.2,\hat{\alpha}+0.2]\times [\hat{\beta}-0.2,\hat{\beta}+0.2]$.
We have chosen the burn-in period based on inequalities \eqref{Et0Nqbound} and \eqref{Nqbound}. The quantity $\pi(\Omega_0)$ was numerically approximated by computing empirical averages from the second half of an initial run of length $10000$ as $0.13$. 
The quantity $\log\left(\frac{\sup_{x\in \Omega_0}\frac{\mathrm{d}\, \pi}{\mathrm{d}\, q}(x)}{\inf_{x\in \Omega_0}\frac{\mathrm{d}\, \pi}{\mathrm{d}\, q}(x)}\right)$ was found by numerical optimization to be approximately $196.6$. By substituting these approximate values into \eqref{Nqbound},  we obtain that $N_q\le 1.86\cdot 10^{86}$. Using \eqref{Et0Nqbound} with the approximation $\hat{\gamma}$ now yields that $E(t_0)\le 6.5 \cdot 10^{-12}$ if $t_0\ge 2000$,  which is negligible for our purposes. To calculate the empirical log-tails of the estimate, we ran $10^5$ independent runs with $N=10^4$ steps each and a burn-in period of $2000$ steps. The results are shown in Figures \ref{subfig1} and \ref{subfig2}. As we can see, the normal approximation performs poorly in this case by considerably underestimating the error while the Bernstein and Chebyshev bounds work well.

\begin{figure}
\centering
\begin{tabular}{cc}

\addtolength{\subfigcapskip}{0.2cm}
\subfigure[\label{subfig1}\textbf{Logistic regression -- probability of failure at $30 ^oF$.} $\hat{\sigma}^2 = 1.58\cdot 10^{-5}$, $\hat{V}_f = 5.19\cdot 10^{-7}$, $\hat{\gamma}=6.03\cdot 10^{-2}$]{
	\includegraphics[height=4.8cm]{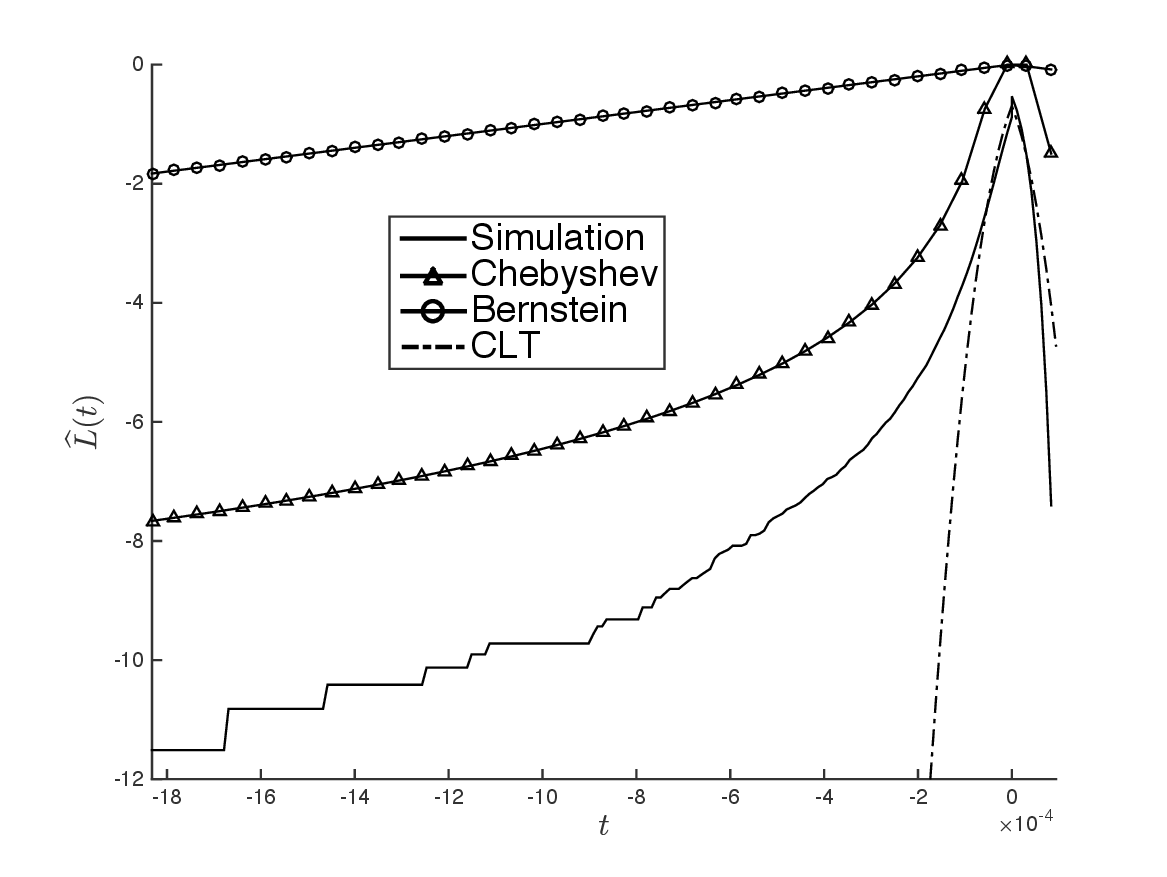}
	}
&
\addtolength{\subfigcapskip}{0.2cm}
\subfigure[\label{subfig2}\textbf{Logistic regression -- probability of failure at $95 ^oF$.} $\hat{\sigma}^2 = 5.29\cdot 10^{-5}$, $\hat{V}_f = 2.28\cdot 10^{-6}$, $\hat{\gamma}=6.03\cdot 10^{-2}$]{
	\includegraphics[height=4.8cm]{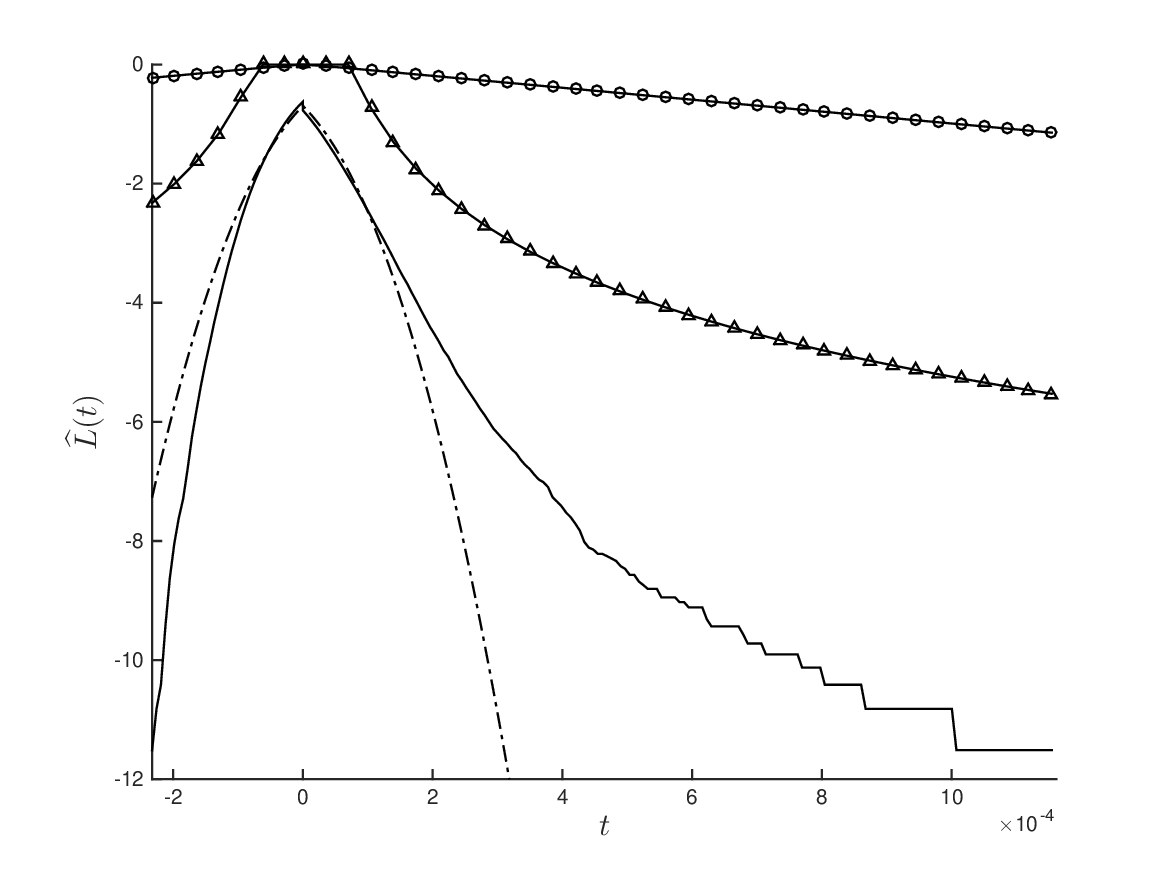}
	}
\\
\addtolength{\subfigcapskip}{0.2cm}
\subfigure[\label{subfig3}\textbf{Competing risk models. } $\hat{\sigma}^2 = 2.96\cdot 10^{-4}$, $\hat{V}_f = 1.90\cdot 10^{-4}$, $\hat{\gamma}=1.07\cdot 10^{-2}$]{
	\includegraphics[height=4.8cm]{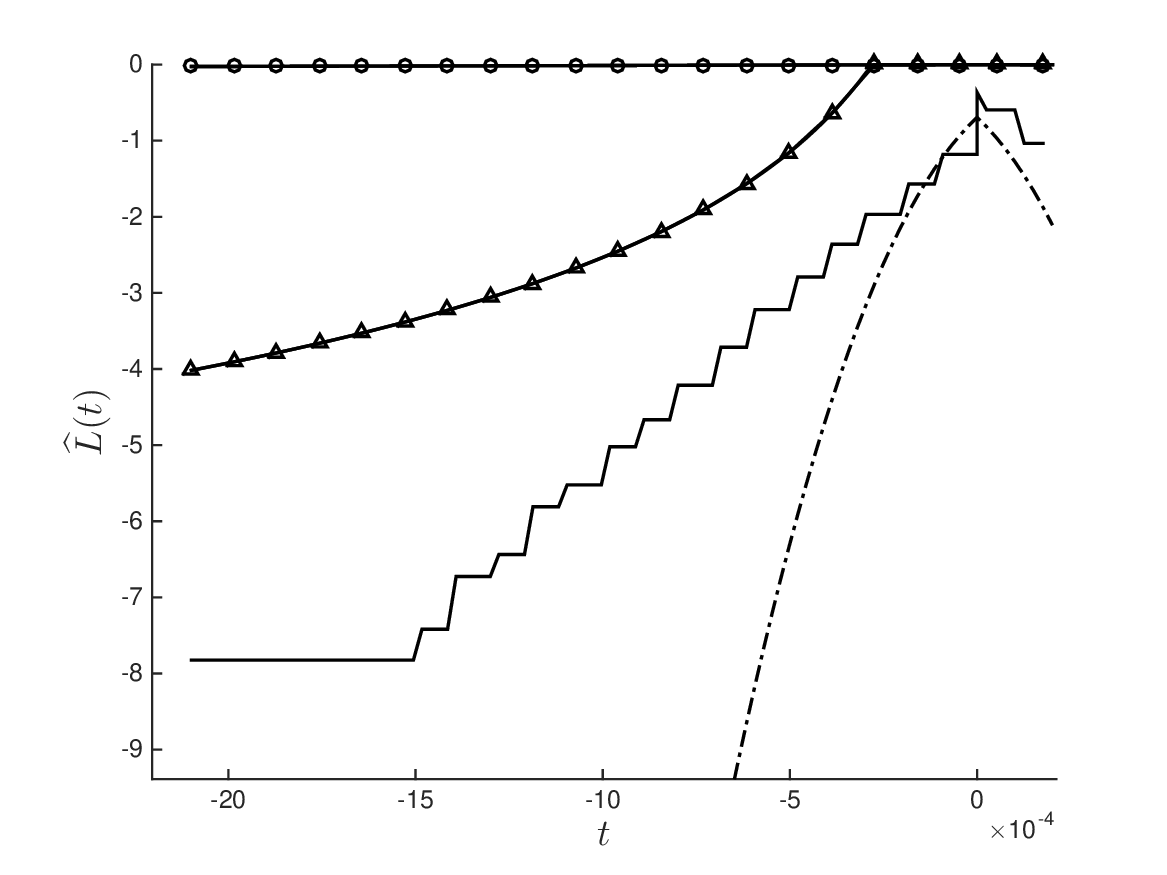}
	}
&
\addtolength{\subfigcapskip}{0.2cm}
\subfigure[\label{subfig4}\textbf{Bayesian clinical trial. } $\hat{\sigma}^2 = 2.55\cdot 10^{-3}$, $\hat{V}_f = 2.76\cdot 10^{-3}$, $\hat{\gamma}_{\mathrm{ps}}=0.817$]{
	\includegraphics[height=4.8cm]{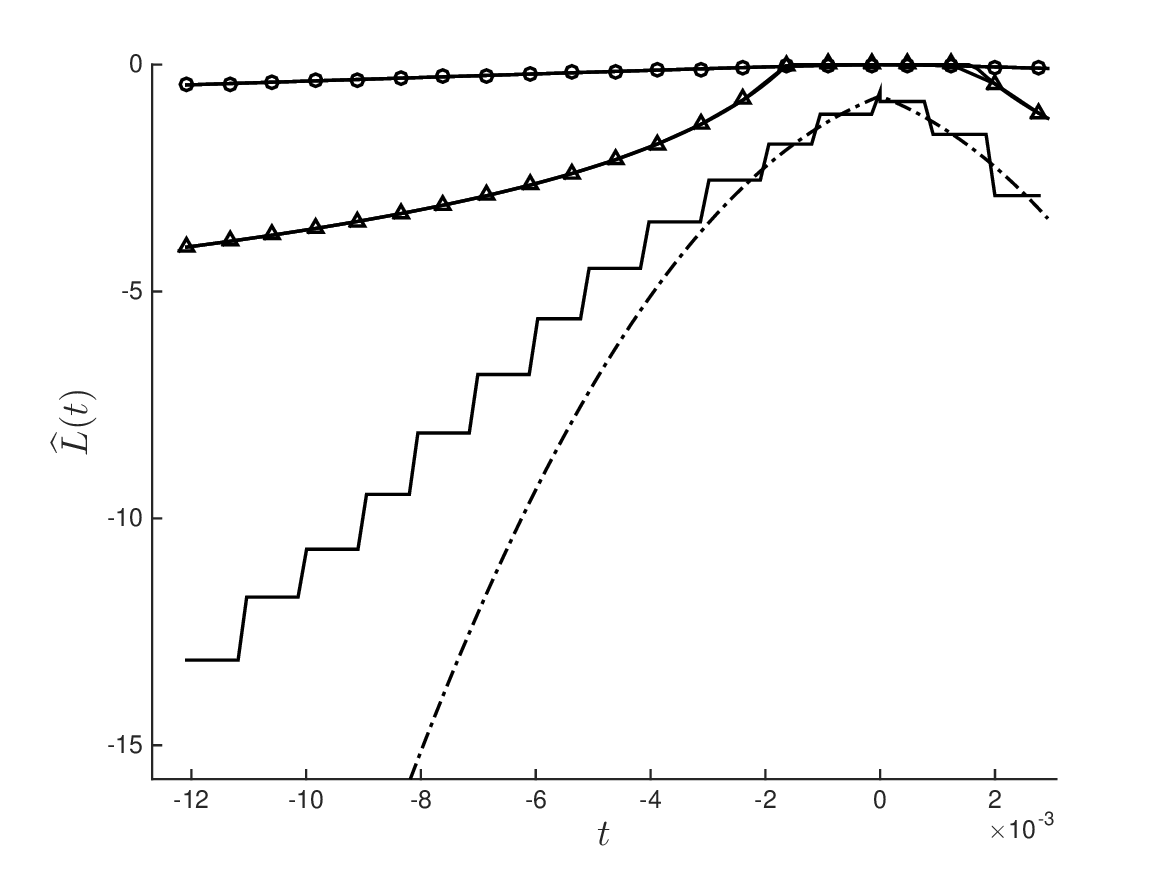}
	}
\end{tabular}
\caption{\textbf{Simulation results for the three examples.} 
The simulation result is plotted according to \eqref{eqlogtailest}. 
We use estimated values of the parameters $\hat{\gamma}$, $\hat{\sigma}^2$ and $\hat{V}_f$ (see Section \ref{estparameters}), 
and plot the Bernstein bound according to \eqref{ReversibleBernsteineq1}. 
as well as the Chebyshev bound according to \eqref{chebempboundrev}. We also show the quantiles of $N(0,\hat{\sigma}^2)$, arising from the CLT (see Section \ref{SecComparison}).}\label{Logisticfig}
\end{figure}
\begin{figure}
\centering
\begin{tabular}{cc}

\addtolength{\subfigcapskip}{0.2cm}
\subfigure{
	\includegraphics[height=4.8cm]{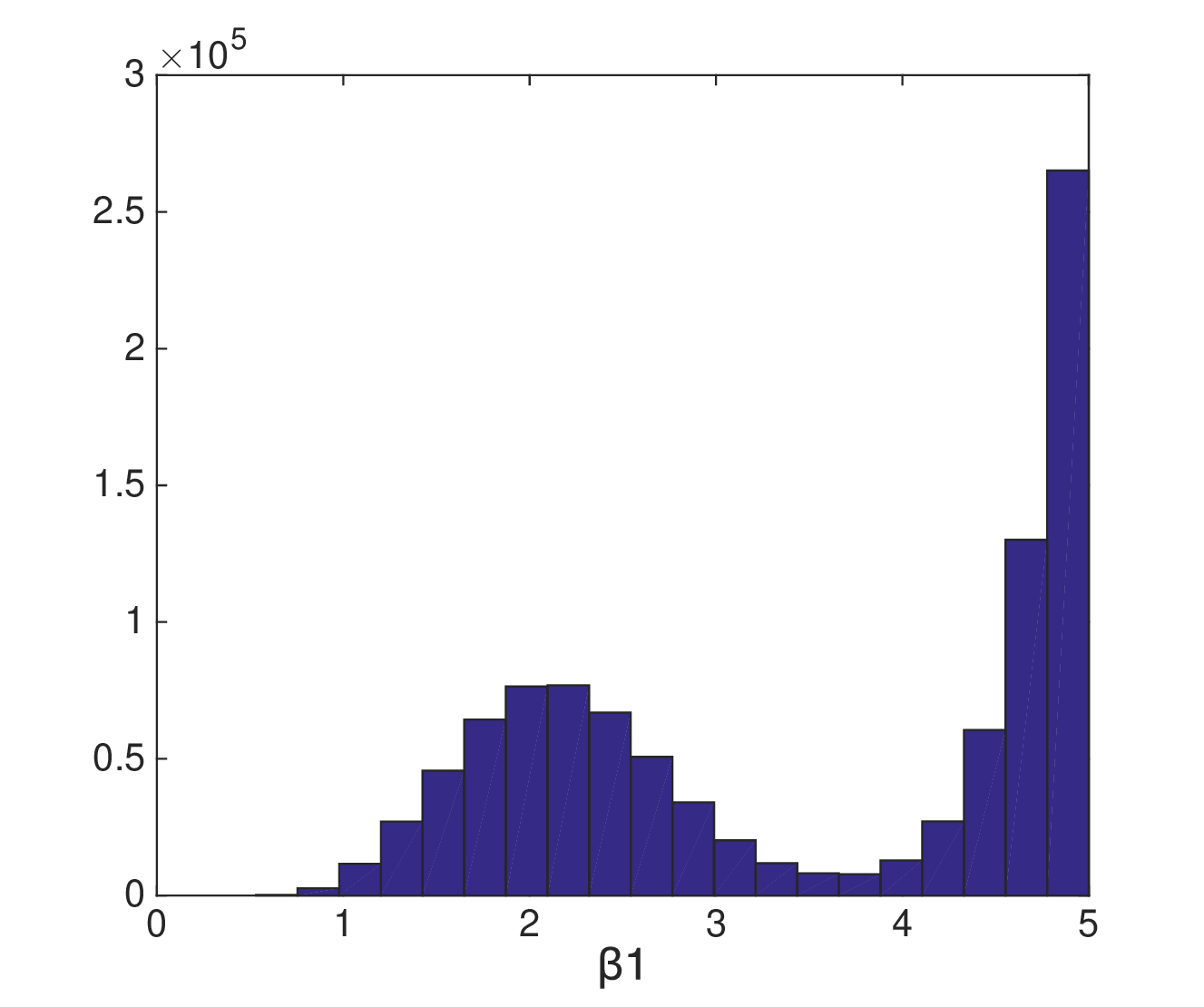}
	}
&
\addtolength{\subfigcapskip}{0.2cm}
\subfigure{
	\includegraphics[height=4.8cm]{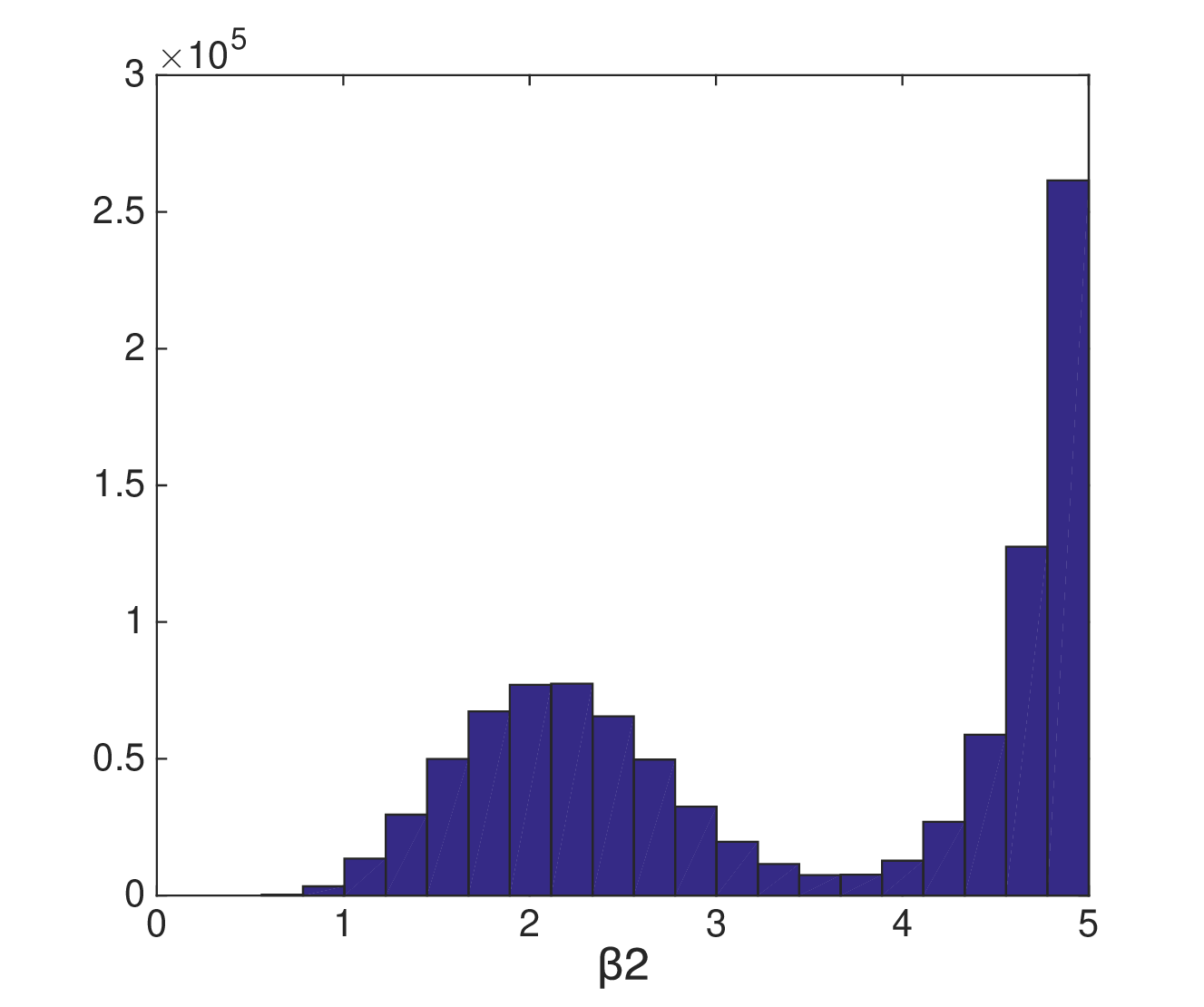}
	}
\\
\end{tabular}
\caption{\textbf{Histograms of marginals of parameters $\beta_1$ and $\beta_2$ for the competing risk model.} }\label{Loglogisticmarginalsfig}
\end{figure}

\subsection{Competing risk models}
Competing risk models attempt to explain the failure time of a system by assuming multiple possible causes of failure with the system failing at the realization of the first one of them. Such models are widely used in survival analysis (see \cite{ibrahim2005bayesian}). In practice we often do not observe the underlying cause of failure. This can lead to non-identifiability, and the resulting posterior distributions can become highly multi-modal due to the label switching problem (see \cite{celeux2000computational}). The simplest MCMC samplers such as Gibbs sampling or random walk Metropolis perform poorly in this context because they rarely make moves between the different modes. However, more advanced samplers such as simulated/parallel tempering are able to overcome these problems (see \cite{marinari1992simulated}, \cite{neal1996sampling}). 

Here we apply our methodology to a parallel tempering MCMC algorithm for a competing risk model implemented based on Section 4.1 of \cite{kozumi2004posterior}. Further details are given in Section \ref{seccompetingdetails} of the Appendix. 
Our model has 4 parameters, ($\beta_1, \sigma_1, \beta_2, \sigma_2$). 
The spectral gap was estimated  by the method of Section \ref{sectionestimatespectralgap} as $\hat{\gamma}=1.07\cdot 10^{-2}$. The initial distribution $q$ was set as uniform distribution on $\Omega_0:=[1,4.6]^{4}$. The probability  $\pi(\Omega_0)$ was estimated from the second half of a $10^5$ long run as $0.014$, and $\log\left(\frac{\sup_{x\in \Omega_0}\frac{\mathrm{d}\, \pi}{\mathrm{d}\, q}(x)}{\inf_{x\in \Omega_0}\frac{\mathrm{d}\, \pi}{\mathrm{d}\, q}(x)}\right)$ was found by numerical optimization to be approximately $331.1$. Based on \eqref{Et0Nqbound} and \eqref{Nqbound} we have that for $t_0:=20000$, $E(t_0)\le 2.10\cdot 10^{-20}$, which is negligible for our purposes. Figure \ref{Loglogisticmarginalsfig} shows the empirical posterior distributions of $\beta_1$ and  $\beta_2$ based on a simulation of length $10^6$ with burn-in of length $t_0$. Clearly, these parameters have multi-modal distributions. 

We chose the function of interest as
$f(\beta_1,\sigma_1,\beta_2,\sigma_2)=\II\left[\E(T|\beta_1,\sigma_1,\beta_2,\sigma_2)>1\right]$, that is, whether the expected survival time of the system is greater than 1.
We ran $m=5000$ parallel chains of length $10^4$, with additional burn-in of length $t_0=20000$, and estimated the empirical mean of $f$ for each run. The log-tails of the empirical averages, as well as the corresponding error bounds are shown in Figure \ref{subfig3}. As we can see, the normal approximation considerably underestimates the error, while the Bernstein and Chebyshev bounds give conservative error estimates.

\subsection{Bayesian methods for clinical trials}
The application of Bayesian analysis for adaptive clinical trials is a relatively new area that holds great promise (see \cite{berry2010bayesian}). Such trials are highly flexible and adaptable to the problems at hand, and they are more powerful than classical parametric methods in many examples. Since they tend to use rather complex models, analytic forms are rarely available and MCMC methods are used for analysis. In this section, we look at a trial proposed for the treatment of diabetes based on pages 211-218 of \cite{berry2010bayesian}. A function of interest $f$, in this case, is the whether the predicted success probability of the treatment exceeds $0.5$. This quantity is very important for the regulatory agency in deciding whether to continue the trial and to declare its success.

The MCMC algorithm updates random variables $\gamma_0, \gamma_1, p, X$ (taking values in $[0,1], [0,1]$, $[0,1]$ and $\{0,1\}^n$, respectively). The sampling follows a Gibbs sampling scheme with systemic scan, which, due to the product nature of the Markov kernel, is non-reversible. Its pseudo spectral gap was estimated using the method of Section \ref{sectionestimatespectralgap} as $\hat{\gamma}_{\mathrm{ps}}=0.817$. 
The initial distribution $q$ was chosen as the uniform distribution on $\left[\frac{1}{168},1-\frac{1}{168}\right]\times \left[\frac{1}{168},1-\frac{1}{168}\right]\times \left[\frac{1}{408},1-\frac{1}{408}\right]\times \{0,1\}^n$.
The burn-in was set as $t_0=600$ based on \eqref{Et0Nqbound} and \eqref{Nqbound}. We provide further details about the sampling scheme, choices of initial distribution and burn-in time in Section \ref{secclinicaldetails} of the Appendix.

To test our methodology on this example, we made $10^6$ parallel runs of length $10^3$ (with an additional burn-in of $600$), and plotted the log-tails of the empirical averages and the error bounds in Figure \ref{subfig4}. Again, the normal approximation underestimates the error while the Chebyshev and Bernstein bounds are conservative. Because of the ethical sensitivity and high cost of clinical trials, we believe that in such situations the additional computational effort needed for using these non-asymptotic bounds is justifiable.

\section*{Final remarks}
In this paper, we demonstrated the practical usability of concentration inequalities for obtaining non-asymptotically valid error bounds for MCMC empirical averages. We stated Chebyshev and Bernstein-type inequalities for reversible, and non-reversible chains, in a form that is directly applicable in practice. We then proposed estimators for all the parameters arising in the bounds, and gave theoretical explanation for their usage. We have included several examples, which show the advantage of the non-asymptotic approach compared to the asymptotic approach using normal approximation. We have found that especially for indicator functions (i.e.\ when estimating the probability of an event), the normal approximation can drastically underestimate the error, while the Chebyshev and Bernstein bounds are conservative, making these bounds preferable for sensitive problems such as clinical trials and risk assessment. Besides the increased reliability, the spectral gap estimate is also useful for tuning the parameters of the algorithm to improve mixing, and for choosing the initial distribution and the necessary burn-in period.

\section*{Acknowledgements}
The authors thank Daniel Rudolf for his comments, and Lee Hwee Kuan for his contribution to the simulation code. DP was supported by AcRF Tier 2 grant R-155-000-143-112 and AcRF Tier 1 grant R-155-000-150-133. 

\bibliographystyle{chicago}

\bibliography{References}

\pagebreak
\section{Appendix}\label{sect:appendix}
\subsection{Proof of error bounds for estimators of $V_f$ and $\sigma^2$}
In this section, we will prove our propositions in Section \ref{estparameters}, bounding the error of estimators $\hat{V}_f$ and $\hat{\sigma}^2(k)$. 

\begin{proposition} 
Suppose that $X_1,\ldots,X_{\hat{N}}$ is an uniformly ergodic Markov chain, with stationary distribution $\pi$, and initial distribution $q$. For any $T \ge 0$,
\begin{equation}\label{eqVfhatV}
\PP_q\left(V_f-\hat{V}_f\ge \frac{\tmix}{\hat{N}-\hat{t}_0}+T\right)\le \exp\left( \frac{-(\hat{N}-\hat{t}_0) T^2}{8 C^4 \tmix}\right)+E(\hat{t}_0).
\end{equation}
\end{proposition}

\begin{proof}[Proof of Proposition \ref{propVhatV}]
Changing the function $f$ to $f-\E_{\pi}f$ does not changes $\hat{V}_f$, so we can assume that $\E_{\pi} f=0$, and $|f(x)|\le C$. Now it is easy to show that $\hat{V}_f$ changes at most by $5C^2/(\hat{N}-\hat{t}_0)$ as we change $X_i$. From this, using McDiarmid's bounded differences inequality for Markov chains (Corollary 2.10 of \cite{Martoncoupling}), we can deduce that for any $T\ge 0$,
\[\PP_{\pi}(\hat{V}_{f}-\E_{\pi} \hat{V}_f\ge T)\le \exp\left(-\frac{T^2 (\hat{N}-\hat{t}_0)}{200 \tmix C^4}\right).\]
Moreover, we have
\[V_f- \E_{\pi} \hat{V}_f=\Var_{\pi}\left(\frac{1}{\hat{N}-\hat{t}_0}\sum_{i=\hat{t}_0+1}^{\hat{N}} f(X_i)\right),\]
which, by Theorems 3.1 and 3.2 of \cite{Martoncoupling}, can be further bounded by $2V_f/\gamma/(\hat{N}-\hat{t}_0)$ for reversible chains, and by $4V_f/\gammaps/(\hat{N}-\hat{t}_0)$ for non-reversible chains. Using Proposition \ref{tmixlambdaprop}, these can be further bounded
by $8\tmix/(\hat{N}-\hat{t}_0)$, and the result follows.
\end{proof}

We will use the following lemma for the proof of our propositions about $\hat{\sigma}^2(k)$.
\begin{lemma}
For $t\in \N$, let $\rho_t:=\E_{\pi}[(f(X_1)-\E_{\pi} f)(f(X_{t+1})-\E_{\pi} f)]$.
Then for reversible chains, for $k\ge 2$ even,
\begin{equation}\label{sigmarevgammateq}
-\min\left(\frac{V_f}{2},\frac{2V_f}{\gamma}\cdot (1-\gamma^*)^{k+1}\right)\le \sigma^2-\left(\rho_0+2\sum_{t=1}^{k}\rho_t\right)\le \frac{2V_f}{\gamma}\cdot (1-\min(\gamma,1))^{k+1}.
\end{equation}
For non-reversible chains, we have, for $k\ge 1$,
\begin{equation}\label{sigmanonrevgammateq}
\left|\sigma^2-\left(\rho_0+2\sum_{t=1}^{k}\rho_t\right)\right|\le \frac{4V_f}{\gammaps}\cdot (1-\gammaps)^{(k+1-1/\gammaps)/2}.
\end{equation}
\end{lemma}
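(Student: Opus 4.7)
The plan is to start from the stationary identity $\sigma^2 = \gamma_0 + 2\sum_{t=1}^\infty \gamma_t$, which I would derive by expanding $\frac{1}{n}\Var_\pi(S_n) = V_f + \frac{2}{n}\sum_{t=1}^{n-1}(n-t)\gamma_t$ and taking $n\to \infty$ (absolute summability of $(\gamma_t)_{t \ge 1}$ is a byproduct of the spectral-gap estimates below). Subtracting the truncated sum immediately reduces the lemma to bounding the geometric tail $\sigma^2 - (\gamma_0 + 2\sum_{t=1}^k \gamma_t) = 2\sum_{t=k+1}^\infty \gamma_t$, which I treat separately in the reversible and non-reversible cases.

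For reversible chains I would expand $g := f - \E_\pi f$ in an $L_2(\pi)$-orthonormal basis $\{\varphi_i\}_{i\ge 2}$ of eigenvectors of $\mtx{P}$ with eigenvalues $\{\lambda_i\}$. Writing $g = \sum c_i \varphi_i$ gives $V_f = \sum c_i^2$ and $\gamma_t = \sum c_i^2 \lambda_i^t$, and summing the geometric series in $t$ yields $2\sum_{t=k+1}^\infty \gamma_t = 2\sum_{i\ge 2} c_i^2 \lambda_i^{k+1}/(1-\lambda_i)$. Because $k$ is even, $\lambda_i^{k+1}$ carries the sign of $\lambda_i$, and the function $\lambda \mapsto \lambda^{k+1}/(1-\lambda)$ is nonnegative and monotone increasing on $[0,1)$; using $\lambda_i \le 1-\gamma$, the positive-eigenvalue contribution is at most $2V_f(1-\gamma)^{k+1}/\gamma$, giving the stated upper bound. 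For the lower bound only the negative-eigenvalue indices matter, and for those $|\lambda_i|\le 1-\gamma^*$ combined with $1-\lambda_i \ge 1$ yields $|\lambda_i^{k+1}/(1-\lambda_i)| \le (1-\gamma^*)^{k+1}$; summing over $i$ gives $-2V_f(1-\gamma^*)^{k+1}$, which is then weakened by the harmless factor $1/\gamma$, and compared against the elementary estimate $\lambda^{k+1}/(1-\lambda) \ge -1/(1+|\lambda|) \ge -1/2$ (the estimate that takes over when $(1-\gamma^*)^{k+1}$ is close to $1$).

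For non-reversible chains the spectral theorem is unavailable, so I would instead work through the pseudo spectral gap. Let $k^*$ achieve the max in the definition of $\gammaps$, so that $Q := (\mtx{P}^*)^{k^*}\mtx{P}^{k^*}$ is self-adjoint and positive with spectral gap $k^*\gammaps$ on mean-zero functions. For mean-zero $g$ this gives $\|\mtx{P}^{k^*}g\|_{L_2(\pi)}^2 = \langle Qg, g\rangle_\pi \le (1-k^*\gammaps)\|g\|_{L_2(\pi)}^2$. Writing $t = mk^* + r$ with $0 \le r < k^*$, iterating $m$ times, and using the $L_2(\pi)$-contractivity of $\mtx{P}$ on mean-zero functions, I obtain $\|\mtx{P}^t g\|_{L_2(\pi)} \le (1-k^*\gammaps)^{m/2}\|g\|_{L_2(\pi)}$. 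Bernoulli's inequality $1-k^*\gammaps \le (1-\gammaps)^{k^*}$, together with $k^* \le 1/\gammaps$ (from $k^*\gammaps \le 1$), repackages this as $\|\mtx{P}^t g\|_{L_2(\pi)} \le (1-\gammaps)^{(t+1-1/\gammaps)/2}\|g\|_{L_2(\pi)}$. Cauchy--Schwarz then gives $|\gamma_t| \le V_f(1-\gammaps)^{(t+1-1/\gammaps)/2}$, and summing the resulting geometric tail with ratio $\sqrt{1-\gammaps}$ contributes a prefactor $1/(1-\sqrt{1-\gammaps}) \le 2/\gammaps$, producing the advertised $(4V_f/\gammaps)(1-\gammaps)^{(k+1-1/\gammaps)/2}$.

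The main technical obstacle is precisely the non-reversible step: the pseudo spectral gap only controls $\mtx{P}^{k^*}$-blocks rather than $\mtx{P}$ itself, so the remainder $r < k^* \le 1/\gammaps$ must be absorbed into the exponent, and this absorption is exactly what produces the slightly awkward $1/\gammaps$ shift in $(1-\gammaps)^{(k+1-1/\gammaps)/2}$. Careful bookkeeping of the Bernoulli comparison is needed to reach the stated exponent, and the constant $4$ factors as $2 \times 2$ — one from the overall prefactor of the tail, one from $1/(1-\sqrt{1-\gammaps}) \le 2/\gammaps$.
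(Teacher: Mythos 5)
Your proof is correct and follows essentially the same route as the paper's: both reduce the claim to the tail $2\sum_{t>k}\gamma_t = 2\langle f,(\mtx{P}-\mtx{\pi})^{k+1}(\mtx{I}-(\mtx{P}-\mtx{\pi}))^{-1}f\rangle_\pi$, handle the reversible case by separating the positive and negative parts of the spectrum (your eigenbasis computation is the coordinate form of the paper's decomposition into $(\mtx{P}-\mtx{\pi})_+^{k+1}-(\mtx{P}-\mtx{\pi})_-^{k+1}$), and in the non-reversible case rest on the norm estimate $\|(\mtx{P}-\mtx{\pi})^{t}\|_{2,\pi}\le(1-\gammaps)^{(t-1/\gammaps)/2}$, which the paper imports from the proof of Theorem 3.7 of the cited reference and combines with the resolvent bound $2/\gammaps$, while you re-derive it from the definition of $\gammaps$ and sum the geometric tail termwise; both give the constant $4$. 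Three small caveats. First, on a general state space a reversible $\mtx{P}$ need not admit an orthonormal eigenbasis, so your reversible argument should be phrased through the spectral measure of the self-adjoint operator $\mtx{P}-\mtx{\pi}$; every inequality then survives verbatim. Second, your elementary estimate $|\lambda|^{k+1}/(1+|\lambda|)\le 1/2$ yields the lower bound $-\min\bigl(V_f,\tfrac{2V_f}{\gamma}(1-\gamma^*)^{k+1}\bigr)$ rather than the $-\min(V_f/2,\cdot)$ appearing in the statement; the paper's own proof, and its subsequent use of the lemma in the definition of $L_k$, also only deliver $V_f$, so this is a discrepancy internal to the paper rather than a defect of your argument. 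Third, when $\gamma>1$ the factor $1/\gamma$ is not obtained by ``weakening''; to place it in the bound for the negative-spectrum contribution you should use $1-\lambda\ge\gamma$ for all non-unit spectral values (which holds since $\gamma=1-\sup\{\lambda\ne 1\}$) instead of $1-\lambda\ge 1$.
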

\begin{proof}
Without loss of generality, assume that $\E_{\pi} f=0$. Define the operator $\mtx{\pi}$ on $L^2(\pi)$ as $\mtx{\pi}(g)(x):=\E_{\pi}(g)$. 
We have $\sigma^2=\rho_0+2\sum_{t=1}^{\infty}\rho_t$, thus
\begin{align*}
&\sigma^2-\left(\rho_0+2\sum_{t=1}^{k}\rho_t\right)=2\sum_{t=k+1}\rho_t=2\left<f,\left(\sum_{t=k+1}^{\infty}\mtx{P}^t\right)f\right>_{\pi}\\
&=2\left<f, \left(\sum_{t=k+1}^{\infty}(\mtx{P}-\mtx{\pi})^t\right)f\right>_{\pi}
=2\left<f, \left(\mtx{P}-\mtx{\pi}\right)^{k+1}\left(\mtx{I}-(\mtx{P}-\mtx{\pi})\right)^{-1} f\right>_{\pi}.
\end{align*}
For reversible chains, we have $\|\mtx{P}-\mtx{\pi}\|_{2,\pi}\le 1-\gamma^*$, and $\|\left(\mtx{I}-(\mtx{P}-\mtx{\pi})\right)^{-1}\|_{2,\pi}=1/\gamma$, thus
\begin{equation}\label{sigma2gammastareq}
\left|\sigma^2-(\rho_0+2\sum_{t=1}^{k}\rho_t)\right|\le \frac{2V_f}{\gamma}\cdot (1-\gamma^*)^{k+1}.\end{equation}
Moreover, we can express the self-adjoint operator $\left(\mtx{P}-\mtx{\pi}\right)^{k+1}\left(\mtx{I}-(\mtx{P}-\mtx{\pi})\right)$ as a sum of positive and negative parts (we also use the fact that $k+1$ is odd):
\[\left(\mtx{P}-\mtx{\pi}\right)^{k+1}\left(\mtx{I}-(\mtx{P}-\mtx{\pi})\right)^{-1}=\left(\left(\mtx{P}-\mtx{\pi}\right)_+^{k+1}-\left(\mtx{P}-\mtx{\pi}\right)_-^{k+1}\right)\left(\mtx{I}-(\mtx{P}-\mtx{\pi})\right)^{-1}.\]
Now it is easy to see that 
\begin{align*}
&\|\left(\mtx{P}-\mtx{\pi}\right)_+^{k+1}\left(\mtx{I}-(\mtx{P}-\mtx{\pi})\right)^{-1}\|_{2,\pi}\le \min(\gamma,1)^{k+1}/\gamma, \text{ and}\\
&\|\left(\mtx{P}-\mtx{\pi}\right)_-^{k+1}\left(\mtx{I}-(\mtx{P}-\mtx{\pi})\right)^{-1}\|_{2,\pi}\le 1/2,\end{align*}
thus
\begin{align*}&-\min\left(V_f,\frac{2V_f}{\gamma}(1-\gamma^*)^{k+1}\right)\le 2\left<f, \left(\mtx{P}-\mtx{\pi}\right)^{k+1}\left(\mtx{I}-(\mtx{P}-\mtx{\pi})\right)^{-1} f\right>_{\pi}\\
&\le \frac{2V_f}{\gamma}\left(1-\min(\gamma,1)\right)^{k+1}.\end{align*}
Combining this and \eqref{sigma2gammastareq} leads to \eqref{sigmarevgammateq}.

For non-reversible chains, by the proof of Theorem 3.2 of \cite{Martoncoupling}, we can show that
$\|\left(\mtx{I}-(\mtx{P}-\mtx{\pi})\right)^{-1}\|_{2,\pi}\le 2/\gammaps$ and 
$\|\left(\mtx{P}-\mtx{\pi}\right)^{k+1}\|_{2,\pi}\le (1-\gammaps)^{(k+1-1/\gammaps)/2}$, thus \eqref{sigmanonrevgammateq} follows.
\end{proof}

First, we prove the bounds on the bias of $\hat{\sigma}^2(k)$, and then the concentration inequality.
\begin{proof}[Proof of Proposition \ref{sigma2kbias}]
For reversible chains, for $0\le i\le k$, from Chebyshev's inequality (Theorem \ref{Chebrevthm}), we get
\begin{align*}&\left|\E_{\pi}\left(\frac{1}{2}\left(\frac{\sum_{j=t_0+1}^{\hat{N}-k} f(X_j)}{\hat{N}-\hat{t}_0-k}\right)^2+\frac{1}{2}\left(\frac{\sum_{j=t_0+i}^{\hat{N}-k+i} f(X_j)}{\hat{N}-\hat{t}_0-k}\right)^2\right)-\left[(\E_{\pi} f)^2+\frac{\sigma^2}{\hat{N}-\hat{t}_0-k}\right]\right|\\
&\le \frac{4V_f}{\gamma^2}\cdot \frac{1}{(\hat{N}-\hat{t}_0-k)^2},
\end{align*}
and thus it follows that
\[\left|\E_{\pi}(\hat{\rho}_i)-\left(\rho_i-\frac{\sigma^2}{\hat{N}-\hat{t}_0-k}\right)\right|\le \frac{4V_f}{\gamma^2}\cdot \frac{1}{(\hat{N}-\hat{t}_0-k)^2}.\]
Summing up in $i$, and using \eqref{sigmarevgammateq} leads to
\begin{align*}&-K_f-\min\left(V_f,\frac{2V_f}{\gamma}(1-\gamma^*)^{k+1}\right)\le \sigma^2-\left(\hat{\rho}_0+2\sum_{i=1}^{k}\hat{\rho}_i +\frac{\sigma^2(2k+1)}{\hat{N}-\hat{t}_0-k}\right)\\
&\le K_f+\frac{2V_f}{\gamma}\cdot (1-\min(\gamma,1))^{k+1},\end{align*}
where $K_f:=\frac{4V_f}{\gamma^2}\cdot \frac{(2k+1)}{(\hat{N}-\hat{t}_0-k)^2}$. Now putting together the terms involving $\sigma^2$, and dividing by $\frac{\hat{N}-\hat{t}_0-3k-1}{\hat{N}-\hat{t}_0-k}$ leads to \eqref{hatsigmakrevexpectationbound}. The proof of \eqref{hatsigmaknonrevexpectationbound} is similar.
\end{proof}

\begin{proof}[Proof of Proposition \ref{sigma2kerrorbound}]
Firstly, it is easy to show for any $0\le i\le k$, $\hat{\rho}_i$ does not change if we replace the function $f$ by $f-\E_{\pi} f$, thus $\sigma^2(k)$ remains the same under such transformation. Now a simple computation shows that changing the value of $X_j$, for $\hat{t}_0+1\le j\le \hat{N}$, can only change $\hat{\rho}_i$ at most by $8C^2/(\hat{N}-\hat{t}_0-k)$, and thus it can only change the value of 
$\hat{\sigma}^2(k)$ at most by $8(2k+1)C^2/(\hat{N}-\hat{t}_0-3k-1)$. From this (the so called Hamming-Lipschitz property), using McDiarmid's bounded differences inequality for Markov chains (Corollary 2.10 of \cite{Martoncoupling}), we can deduce \eqref{eqconcsigmak}. Finally, \eqref{eqconcsigmakrev} and \eqref{eqconcsigmaknonrev} follow by combining this with the bounds on the bias.
\end{proof}

\subsection{Proof of the mixing time bound for general state spaces}\label{secmixingtimebound}
In this section, we will prove inequality \eqref{eqtmixboundgen}. 

First, using the characterisation $P(x,\cdot)=a(x) q(x,\cdot)+ (1-a(x))\delta_x$, it is easy to see that 
for any integer $k\ge 1$, $P^k(x,\cdot)$ can be written in the form
\[P^k(x,\cdot)=(1-a_k(x))\delta_x + a_{k}(x) \int_{y\in \Omega}c_k(x, \mathrm{d}y) q(x,\cdot),\text{ with }\int_{y\in \Omega}c_k(x, \mathrm{d}y)=1.\]
It is also clear that $(1-a_k(x))\le (1-\inf_{x\in \Omega}a(x))^k$. Therefore using convexity it follows that
\[\sup_{x\in \Omega}\dtv(P^k(x,\cdot),\pi)\le (1-\inf_{x\in \Omega}a(x))^k + \sup_{x\in \Omega}\dtv(P^k q(x,\cdot), \pi).\]
Since $(1-\inf_{x\in \Omega}a(x))^k\le e^{-2}$ for $k\ge \frac{2}{\inf_{x\in \Omega}a(x)}$, for proving \eqref{eqtmixboundgen} it suffices to show that 
$\sup_{x\in \Omega}\dtv(\mtx{P}^k q(x,\cdot), \pi)\le \frac{1}{4}-e^{-2}$ for 
\begin{align*}&k\ge 
\frac{2}{\gamma}+\frac{1}{2 \gamma}\cdot\\
&\sup_{x\in \Omega}\left(\sup_{y\in \Omega_0^{x}}\log\frac{\mathrm{d}\, \pi}{\mathrm{d}\, \mu}(x)-\inf_{y\in \Omega_0^{x}}\log\frac{\mathrm{d}\, \pi}{\mathrm{d}\, \mu}(x) +\sup_{y\in \Omega_0^{x}}\log\frac{\mathrm{d}\, q}{\mathrm{d}\, \mu}(x)-\inf_{y\in \Omega_0^{x}}\log\frac{\mathrm{d}\, q}{\mathrm{d}\, \mu}(x) +\log(1/\pi(\Omega_0^{x}))\right).
\end{align*}
This follows from inequalities \eqref{Et0Nqbound}, \eqref{Nqbound} and the fact that $(1-c)^{d/c}\le \exp(-d)$ for $c\in (0,1), d>0$. The proof for the non-reversible case follows the same lines.

\subsection{Details for the example on competing risk models}\label{seccompetingdetails}
Here we briefly introduce the exact details of the model and the sampler, and show our simulation results.  The actual failure time is $T=\min(X_1,X_2,\ldots,X_k)$, where $X_j$ is the theoretical time for failure for the $j$th cause. We assume that the random variables $X_1, \ldots, X_k$ are independent. $\delta$ denotes the indicator variable whether the failure is observed ($\delta=1$) or right-censored ($\delta=0$). In each observation, we obtain a realization of the two random variables $(T,\delta)$.

The hazard and survival functions of $T$ are 
\[h(t|\theta)=\sum_{j=1}^{k}h_j(t|\theta_j), \quad S(t|\theta)=\prod_{j=1}^k S_j(t|\theta_j).\]
Here $\theta=(\theta_1,\ldots,\theta_k)$,  $\theta_j=(\beta_j,\sigma_j)$ for $1\le j\le k$, and we choose $h_j$ and $S_j$ according to the poly-log-logistic model as
\[h_j(t|\theta_j)=\frac{\beta_j t^{\beta_j-1}}{\sigma_j^{\beta_j}\left\{1+\left(\frac{t}{\sigma_j}\right)^{\beta_j}\right\}}, \quad S_j(t|\theta_j)=\frac{1}{1+\left(\frac{t}{\sigma_j}\right)^{\beta_j}}.\]
Let $(t_1,\delta_1), \ldots, (t_n, \delta_n)$ be $n$ independent observations that might be self-censored, then the likelihood function is written as 
\[L(\theta)=\prod_{i=1}^{n} \left[h(t_i|\theta)^{\delta_i} \cdot S(t_i|\theta)\right].\]

Similarly to Section 4.1 of \cite{kozumi2004posterior}, we have generated a data set from this model by setting $k=2$, $n=50$, $(\beta_1,\sigma_1)=(1,1)$, $(\beta_2,\sigma_2)=(4,4)$. We have chosen a uniform prior on $[0,5]^4$, and ran a parallel tempering MCMC sampler on the posterior. This sampler targets the product distribution $\pi_0\times \ldots\times \pi_{r-1}$. We have chosen $r=10$ tempering distributions, with the final distribution being the posterior distribution $\pi(z)$, and the $i$th one chosen as $\pi_i(z):=\pi(z)^{i/(r-1)}/Z_{i}$ (for some normalizing constant $Z_i$). In each step, when being in location $x=(x_0,x_1,\ldots,x_{r-1})$, we have first made a temperature move, that is, choose $k$ uniformly from $0,1,\ldots, r-2$, and exchanged stages $k$ and $k+1$ (i.e.\ the new location is $(x_0,\ldots, x_{k-1},x_{k+1},x_k,x_{k+2}, \ldots, x_{r-1})$) with probability given by 
\[
\rho(x,k,k+1):=\min\left(1,\frac{\pi_k(x_{k+1})\pi_{k+1}(x_{k})}{\pi_{k}(x_{k})\pi_{k+1}(x_{k+1})} \right),
\]
the so-called ``Metropolis ratio''. After this, when moved according to random walk Metropolis with Gaussian proposals in each stage, and finally made another temperature move.
The resulting sampler is reversible. The initial distribution is chosen as the uniform distribution on $[1,4.6]^4$. The average acceptance rates of the temperature moves between the different stages were $0.0766, 0.5311, 0.6765, 0.7596$, respectively, indicating that the exchanges happened with high probability.

More details and theoretical results on parallel tempering are presented in \cite{Woodardrapidmixing}.

\subsection{Details for the example on clinical trials}\label{secclinicaldetails}
In this section, we include some details on the model and the MCMC sampling scheme for our example about clinical trials, based on Section 5.4 of \cite{berry2010bayesian}. We have implemented the MCMC Algorithm 5.3 on page 214 of \cite{berry2010bayesian} in the same way as in the book\footnote{Available as example5.4.R on \url{http://www.biostat.umn.edu/~brad/software/BCLM_ch5.html}.}

The trial is conducted for $n$ patients in total. The data $Y_i$ is an early indicator variable about the success of the treatment for the $i$th patient after 1 weeks (taking binary values 0 or 1), and $X_i$ is the binary indicator variable corresponding to the primary outcome after 1 month.
We define the vectors $X=(X_1,\ldots, X_n)$, $Y=(Y_1,\ldots, Y_n)$.

There are 3 groups of subjects. For $n_X$ subjects, both $X_i$ and $Y_i$ are observed. For $n_Y$ subjects, we have observed $Y_i$ but not $X_i$. Finally for $n_0$ subjects we have neither $X_i$ nor $Y_i$. The total number of subjects is $n=n_X+n_Y+n_0$.

The trial itself is rather complex, but precisely defined before it is started and no changes are made while it is on-going. If the indicated success rate is sufficiently high after 1 weeks, then the trial is stopped at that point, otherwise we continue for 1 month.

The conditional distributions $X_i | Y_i$ are determined by parameters $\gamma_0$, $\gamma_1$ and $p$ as
\begin{align*}
&\PP(X_i=1|Y_i=0):=\gamma_0,\\
&\PP(X_i=1|Y_i=1):=\gamma_1,\\
&\PP(X_i=1|Y_i\text{ is not observed}):=p.
\end{align*}
We assume that the conditional of $\gamma_0$, $\gamma_1$ and $p$ given the data are Beta distributed. The posterior distribution of the parameters and the unobserved components of the primary outcome data vector $X$ can be expressed as the stationary distribution of the following Markov chain (Algorithm 5.3 of \cite{berry2010bayesian}).
\begin{description}
\item[\textbf{Step 1:}] $\gamma_0$ is drawn from its full conditional distribution, 
\[\gamma_0|X,Y \sim \mathrm{Beta}\left(\alpha_0+\sum_{i=1}^{n_X}\II[X_i=1|Y_i=0],\beta_0+ \sum_{i=1}^{n_X}\II[X_i=0|Y_i=0]\right).\]
\item[\textbf{Step 2:}]$\gamma_1$ is drawn from its full conditional distribution, 
\[\gamma_1|X,Y \sim \mathrm{Beta}\left(\alpha_1+\sum_{i=1}^{n_X}\II[X_i=1|Y_i=1],\beta_1+ \sum_{i=1}^{n_X}\II[X_i=0|Y_i=1]\right).\]
\item[\textbf{Step 3:}] For each subject with $Y_i$ but no $X_i$, an imputed $X_i$ is generated as
\[\PP(X_i=1|Y_i,\gamma_0,\gamma_1)=\gamma_{Y_i}.\]
\item[\textbf{Step 4:}] For each subject with no data, an observed value is simulated as
\[\PP(X_i=1|p)=p.\]
\item[\textbf{Step 5:}] $p$ is simulated from its full conditional distribution
\[p|X\sim  \mathrm{Beta}\left(\alpha+\sum_{i=1}^n \II[X_i=1],\beta+\sum_{i=1}^n \II[X_i=0]\right).\]
\end{description}
The stationary distribution of this Markov chain is the posterior distribution of 
$(\gamma_0,\gamma_1,p,X)$.

Due to the reversibility of each of the steps, the adjoint $P^*$ of this Markov kernel $P$ is defined by repeating the steps in inverse order, starting from Step 5.
Based on this, we have applied the estimation procedure of Section \ref{sectionestimatespectralgap} for the pseudo spectral gap, and obtained the estimate $\hat{\gamma}_{\mathrm{ps}}=0.817$. 

We are going to choose the burn-in time $t_0$, and the initial distribution $q$ based on the bounds \eqref{Et0Nqbound} and \eqref{Nqbound}. The log-likelihood is not directly available in this case, but as we shall see, the changes in the log-likelihood can be bounded nevertheless based on the marginals. The logarithm of the density function of a $\mathrm{Beta}(\alpha,\beta)$ distribution is
\[\log f(x;\alpha,\beta)=\log\left(\frac{\Gamma(\alpha+\beta)}{\Gamma(\alpha)\Gamma(\beta)}\right)+(\alpha-1)\log(x)+ (\beta-1)\log(1-x)\text{ for }0\le x\le 1.\]
This function can be shown to satisfy that for any interval $[a,b]\subset [0,1]$, 
\begin{equation}\label{eqbeta1}\max_{x\in [a,b]}\log f(x;\alpha,\beta)-\min_{x\in [a,b]}\log f(x;\alpha,\beta)\le \max(\alpha-1,\beta-1) \log\left(\frac{b}{a}\right).
\end{equation}
By taking the derivative of $\log f(x;\alpha,\beta)$, one can show that for $\alpha\ge 1$, $\beta\ge 1$, $\alpha+\beta>2$, the maximum is taken at 
$x_{\max}=\frac{\alpha-1}{\alpha+\beta-2}$. One can show that if we suppose in addition that $\alpha$, $\beta$ are integers, then
$f(x_{\max};\alpha,\beta)\le \max(\alpha,\beta)$, implying that for $Z_{\alpha,\beta}\sim \mathrm{Beta}(\alpha,\beta)$,
\begin{equation}\label{eqbeta2}
\PP\left(Z_{\alpha,\beta}\notin \left[\frac{1}{8\max(\alpha,\beta)},1-\frac{1}{8\max(\alpha,\beta)}\right]\right)\le \frac{1}{4}.
\end{equation}

The data and the parameters were chosen according to Tables 5.9 and 5.10 of \cite{berry2010bayesian}. In particular, 
\[n_0=10, n_X=20, n_Y=20, n=50, \alpha_0=\alpha_1=\alpha_p=\beta_0=\beta_1=\beta_p=1.\]
We choose the initial distribution $q$ for $(\gamma_0,\gamma_1,p,X)$ as the uniform distribution on
\[\Omega_0:=\left[\frac{1}{168},1-\frac{1}{168}\right]\times \left[\frac{1}{168},1-\frac{1}{168}\right]\times \left[\frac{1}{408},1-\frac{1}{408}\right]\times \{0,1\}^n,\]
since for this choice, \eqref{eqbeta2} guarantees that $\pi(\Omega_0)\ge \frac{1}{4}$.
Moreover, using \eqref{eqbeta1}, and the marginals in the steps of the Markov chain, one can show that
\[\log\left(\frac{\sup_{x\in \Omega_0}\frac{\mathrm{d}\, \pi}{\mathrm{d}\, q}(x)}{\inf_{x\in \Omega_0}\frac{\mathrm{d}\, \pi}{\mathrm{d}\, q}(x)}\right)\le 20\log(168)+20\log(168)+50\log(408)+50\log(408)\approx 806.1.\]
Therefore it follows from \eqref{Nqbound} that $\log(N_q)<808$. Now \eqref{Et0Nqbound} implies that  based on the estimated value $\hat{\gamma}_{\mathrm{ps}}=0.817$, with the choice $t_0=600$, $E(t_0)<10^{-35}$, which is negligibly small for our purposes.

\end{document}